\newcommand{\Z}{\mathbb{Z}}       
\newcommand{\Q}{\mathbb{Q}}       
\newcommand{\R}{\mathbb{R}}
\newcommand{\C}{\mathbb{C}}       
\newcommand{\ord}{\mathrm{ord}\,}
\theoremstyle{plain}
\newtheorem{lause}{Theorem}         
\newtheorem{lemma}{Lemma}[section]
\theoremstyle{definition}              
\theoremstyle{remark}
\title{Baker-Type Estimates for Linear Forms in the Values of $q$-series}
\author{Leena Leinonen\footnote{The author was supported by Finnish Academy of Science and Letters, V\"ais\"al\"a Foundation, 
and by the Academy of Finland, grant 138522.}}
\begin{document}

\maketitle

\begin{abstract}
A Baker-type linear independence measure is obtained for the values of generalized Heine series at non-zero points of an imaginary quadratic number field. This kind of estimate depends on the individual coefficients of the linear form, not only on the maximum of their absolute values. 
\end{abstract}

\maketitle

In the present paper we are interested in a certain type linear independence measures for the values of generalized Heine series. For this purpose we define these functions precisely first.
Let $K$ denote the field of the rational numbers $\Q$ or  an imaginary quadratic number field $\mathbb{I}$,  and  $\Z_{K}$ its ring of integers. 
Let $q=\frac{a}{b}\in K$ ($a,b\in \Z_{K}$) be such that $|q| > 1$. The generalized Heine series are defined by the equation
\begin{equation}\label{heine}
\varphi(z)= 1 + \sum_{n=1}^\infty \frac{q^{-sn(n-1)/2}}{P(1)P(q^{-1})\cdots P(q^{-(n-1)})}z^n,
\end{equation}
where $s\in \Z_+$,\, 
$P(z)$ is a polynomial in $K[z]$, $\deg P(z)\leq s$, $P(0)\ne 0$ and $P(q^{-k})\ne 0$ for all $k=0,1,2,\ldots$.
There are two interesting special cases of \eqref{heine},
the Tschakaloff function ($s=1$, $P(z) \equiv q$)
\begin{equation}\label{Tsch}
T_q(z)= \sum_{n=0}^\infty q^{-n(n+1)/2}z^n
\end{equation}
and the $q$-exponential function ($s=1$, $P(z)=q-z$)
\begin{equation}\label{exp_qq}
E_q(z)=\sum_{n=0}^\infty \frac{z^n}{(q-1)\cdots(q^n-1)}=\prod_{n=1}^\infty(1+
\frac{z^n}{q^n}).
\end{equation}
The arithmetical properties of \eqref{Tsch} and \eqref{exp_qq} have been studied in numerous works. There are two excellent surveys, \cite{Bu} and \cite{Va}, concerning these results. Furthermore, the arithmetical properties of the generalized Heine series \eqref{heine} have been studied in several papers. 
For example, Stihl \cite{ST} considered the linear independence of the values of \eqref{heine} in the case
\[P(z)= (1-q^{-\beta_1}z)(1-q^{-\beta_2}z)\cdots(1-q^{-\beta_N}z)(1-a_{N+1}z)(1-a_{N+2}z)\cdots (1-a_tz)\]
with $\beta_i\in \Z_+$, $a_i\in \Q$ and $0< N < t <s$. Stihl's  result have been generalized in \cite{Ka} and \cite{SV}, but in all these works it has been assumed that $t<s$. Bézivin \cite{Be} considered also the case $t=s$, but his linear independence result was not quantitative. The first linear independence measure for the values of generalized Heine series \eqref{heine} was presented in \cite{Va2} by Väänänen.

Typically linear independence measures are given in the terms of maximum of the coefficients of the linear form. For example, the estimates in \cite{Ka}, \cite{SV}, \cite{ST} and \cite{Va2} are in this form. Baker, in 1956, introduced in \cite{Ba} a different type linear independence measure, where the measure depends on individual coefficients. He proved this kind of measure for the values of exponential function. 

There are several later Baker-type linear independence results, see eg.\cite{Fe1}, \cite{Fe2}, \cite{Mah}, \cite{So}, \cite{Va1} and \cite{Zu}. Matala-aho \cite{Mat} has very recently made an axiomatic approach to Baker-type estimates.

The first Baker-type measure for $q$-series was obtained by Väänänen and Zudilin \cite{VZ}. They studied the values of 
\eqref{heine} in the case where $q\in \Z_{K}$.  We are going to generalize this result for all $q\in K$ which are nearly integers (see condition \eqref{upp_gamma}).
We shall follow the construction idea of \cite{VZ} with some refinements to our case. 
More precisely, we prove the following result.
\begin{lause}\label{thm1}
Suppose that $\alpha_1, \ldots, \alpha_m$ are non-zero elements of $K$ satisfying conditions \,$\alpha_i \ne \alpha_jq^{\,l}$, $l\in \Z$,\, for all $i \ne j$\, and either $\deg P(z) < s$ or $\deg P(z)=s$ and $\alpha_i \neq P_sq^n$ ($i=1,\ldots, m$, $n\in\Z_+$), where $P_s$ is the leading coefficient of $P(z)$. Suppose
that
\begin{equation}\label{upp_gamma}
\gamma := \frac{\log|b|}{\log|a|} < \Gamma(m,s),
\end{equation}
where $\Gamma(m,s)$ is defined below in \eqref{upp_Gamma}.  
Then for any given $\varepsilon >0$, there exists a positive constant $C=C(\varepsilon)$ such that for all \, $(l_0,l_1,\ldots, l_m)\in \Z_K^{m+1} \setminus\{\bar{0}\}$ 
we have 
\begin{equation}
|l_0 + l_1\varphi(\alpha_1)+ \cdots + l_m\varphi(\alpha_m)| > CH^{-\mu-\varepsilon},
\end{equation}
where
\[ H = \prod_{i=1}^m H_i, \quad H_i = \max\{1, |l_i|\}\] 
and
\[\mu=\mu(m,s, \gamma)= \frac{4s\rho_0^2 + 4(s+2)\rho_0 + (s+17)}{4\rho_0 - 13m-\gamma(4ms\rho_0^2+4(ms+2m+1)\rho_0 +ms+ 4m + 2)}\]
with
\[\begin{split}
\rho_0 = &\rho_0(m,s, \gamma) =\frac{13m}{4} +\frac{\gamma}{2(1-\gamma)}\\
&+ \sqrt{\left( \frac{13m}{4}+ \frac{\gamma}{2(1-\gamma)}\right)^2 + \frac{13m(s+2) +s +17}{4s}+\frac{(s+2)\gamma}{2s(1-\gamma)}}.
\end{split}\]

Define that 
\[f(\gamma)= \frac{4\rho_0 - 13m}{4sm\rho_0^2 + 4(2m+ms+1)\rho_0 + 4m+ms+2}-\gamma.\] The upper bound in the condition \eqref{upp_gamma} is 
\begin{equation}\label{upp_Gamma}
\Gamma(m,s)= \min \{\tau\,| \, f(\tau)=0,\, 0<\tau< 1\}. \end{equation}
\end{lause}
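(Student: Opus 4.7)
The plan is to follow the Padé-type construction of Väänänen--Zudilin \cite{VZ} with refinements accounting for the fact that $q=a/b$ is only ``nearly integer''. The heart of any Baker-type proof is to build a family of simultaneous linear forms
\[ L_k(z) = Q_{k,0}(z) + \sum_{i=1}^m Q_{k,i}(z)\,\varphi(\alpha_i z), \]
parametrised by a tuple $(n_1,\ldots,n_m)$ of non-negative integers prescribing the vanishing order of $L_k$ at $z=0$ in each of the $m$ directions independently. One starts from a polynomial ansatz as in \cite{VZ} and solves the linear system forcing $L_k$ to vanish to the required orders; the $q$-recursion obeyed by $\varphi$ keeps the coefficients of the $Q_{k,i}$ rational in $q$ with denominators one can track explicitly via products of values $P(q^{-j})$.

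The second step is to collect three standard estimates. An analytic tail estimate yields $|L_k(1)| \le |q|^{-c_1 n^2 + O(n)}$, where $n=n_1+\cdots+n_m$ and $c_1>0$ depends only on $s$; a term-by-term bound on a polynomial of known degree gives $|Q_{k,i}(1)| \le |q|^{c_2 n^2 + O(n)}$; and a clearing-of-denominators step—multiplying by a suitable product of $P(q^{-j})$'s and by a power of $b$—produces elements of $\Z_K$ at the cost of an extra size factor $|b|^{c_3 n^2} = |a|^{\gamma c_3 n^2}$. This is precisely the point where the present proof departs from \cite{VZ}: in that paper $q$ is an algebraic integer and $b=1$, whereas here $\gamma$ enters additively into the exponent governing the house of the integer coefficients of the forms.

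The third step is a non-vanishing argument ensuring that one can extract $m+1$ linearly independent members of the family. The hypothesis $\alpha_i \ne \alpha_j q^l$ prevents the $q$-functional identity of $\varphi$ from forcing a collapse across different directions, while the extra condition $\alpha_i \ne P_s q^n$ in the critical case $\deg P = s$ ensures that the leading coefficient of the constructed $Q_{k,i}$ is non-zero. As in \cite{VZ} this can be packaged as the non-vanishing of a Vandermonde-type determinant in the ratios $\alpha_i/\alpha_j$, possibly after a perturbation argument to resolve accidental coincidences of the construction.

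The final and most technical step is Baker's trick. Given a non-trivial $\Lambda = l_0 + \sum_{i=1}^m l_i \varphi(\alpha_i)$, one selects $m+1$ forms from the family and applies Cramer's rule to express $\Lambda$, multiplied by a non-zero algebraic integer, as a determinant whose entries are either the small quantities $L_k(1)$ or the moderately large coefficients $Q_{k,i}(1)$. Taking each $n_i$ asymptotically proportional to $\log H_i$ allows each $H_i$ to appear with its own exponent, so that the product $H=\prod H_i$ emerges naturally. Optimising the single remaining scale parameter $\rho$, which balances the arithmetic loss $\gamma c_3 n^2$ against the analytic gain $c_1 n^2$, reduces to a quadratic in $\rho$ whose positive root is exactly the $\rho_0$ appearing in the statement; the corresponding exponent of $H$ is then $\mu$. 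The threshold $\Gamma(m,s)$ is precisely the value of $\gamma$ at which the denominator of $\mu$ becomes zero, i.e.\ where the method ceases to yield a non-trivial bound. I expect the main obstacle to be exactly this final bookkeeping: carrying all constants faithfully through the optimisation and verifying that the admissibility of $\rho$ remains compatible throughout the range $\gamma < \Gamma(m,s)$.
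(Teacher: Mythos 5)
Your proposal follows the same overall route as the paper: Pad\'e-type approximations of the second kind built via the Thue--Siegel lemma, a Vandermonde-type non-vanishing argument hinging on $\alpha_i\ne\alpha_j q^l$, Baker's trick with the orders $n_i$ tuned individually to $\log H_i$, and a final optimisation over a single scale parameter $\rho$, with the denominator $b$ entering as an extra $|a|^{\gamma\cdot(\cdot)}$ in the integrality step. That is the correct skeleton and you have identified exactly where the present setting departs from V\"a\"an\"anen--Zudilin. However, several of your steps are phrased vaguely or slightly off, and it is worth pointing out where the paper is more specific, since those details are what make the bookkeeping close.

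First, the approximations have the fixed type II shape $A(z)f_i(z)-B_i(z)=R_i(z)$ with a \emph{single} numerator $A(z)$ shared across all $i$; your displayed ansatz $Q_{k,0}+\sum_i Q_{k,i}\varphi(\alpha_i z)$ reads as a type I form and would lead to a different linear system and count of unknowns. Second, the family of $m+1$ rows is not produced by re-running Thue--Siegel: it comes from iterating the $q$-difference equation with the shift $J_z$, giving explicit recursions $A_j(z)=z^sA_{j-1}(qz)$, $B_{ji}(z)=\alpha_i^{-1}(A_{j-1}(qz)Q_i(z)+P(z)B_{j-1,i}(qz))$, and $R_{ji}(z)=\alpha_i^{-1}P(z)R_{j-1,i}(qz)$. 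Keeping the dependence on $j$ this explicit is what lets one control degrees, integrality and sizes uniformly for $j=0,\dots,m$. Third, the forms are \emph{not} evaluated at $z=1$ but at $z=\alpha q^{-k}$, with $k$ chosen in a window $(\rho-m\delta)N - O(1)< k\le \rho N$ so that $\Delta(\alpha q^{-k})\ne 0$; Lemma~\ref{iteration} then converts $f_i(\alpha q^{-k})$ back into $f_i(\alpha)$ at the cost of the controlled factor $X_k(\alpha,q)$, and it is this $k$-dependence that introduces the $s\rho^2/2$ term into the exponents. Fourth, there is no perturbation argument: the lemma shows $\Delta(z)\not\equiv 0$ by computing the lowest-order coefficient as a genuine Vandermonde in $\alpha_i^{-1}q^{k_i}$; since $\Delta$ has bounded degree minus order, the window above always contains an admissible $k$. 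Finally, the extraction step is not Cramer's rule on $\Lambda$: one writes $pL = G + R$ with $G=p l_0+\sum l_i q_{jik}\in\Z_K$ and $R=\sum l_i r_{jik}$, uses $\Delta_k\ne 0$ to pick $j$ with $G\ne 0$, so that $|G|\ge 1$ and $|p||L|\ge 1-|R|>1/2$. These are all reachable refinements of what you sketched, but the gap between ``select $m+1$ linearly independent members, possibly after perturbation'' and the paper's explicit iteration--evaluation--determinant chain is exactly where a blind attempt would stall, so it deserves flagging.

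Your reading of $\Gamma(m,s)$ as the threshold where the denominator of $\mu$ vanishes, and of $\rho_0$ as the positive root of the quadratic minimising $\mu$ at the fixed choice $\delta_0=1/2$, is correct and matches the last section of the paper.
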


When $\gamma=0$ (or $q\in \Z_K$), our result is exactly the same as in \cite{VZ}. Our general result can be also obtained  by using Matala-aho's axiomatic approach \cite{Mat}, which gives us the error term $\varepsilon$ more accurately:
\[\varepsilon=\frac{A}{\sqrt{\log H}},\]
where the constant $A$ can be computed explicitly. We will also give an alternative proof to our result, where we apply Matala-aho's method.


\section{A Difference Equation} 
\label{difference}
We shall consider analytic solutions of the Poincaré-type $q$-difference equation
\begin{equation}\label{differenssi}
\alpha z^s f(z)= P(z)f(qz) + Q(z),
\end{equation}
where $\alpha\in K$ is non-zero, $s\in \Z_+$,  $P(z)$ and $Q(z)$ are polynomials in $K[z]$,  $t=\deg{P(z)} \leq s$, $P(0)\neq 0$ and $Q(z)\not\equiv 0$. Amou, Katsurada and Väänänen 
introduced the connection between the equation \eqref{differenssi} and the function $\varphi(z)$ 
already in  \cite{AKV}. We show similarly as they did in \cite{AKV} that \eqref{differenssi} 
has a unique solution in the set of formal power series $\C[[z]]$, which converges in a 
neighbourhood of the origin. After that we will reveal the connection between the solution and $\varphi(z)$. 

Let 
\[f(z) = \sum_{\nu=0}^\infty f_\nu z^\nu \in \C[[z]]\] 
be a solution of \eqref{differenssi}. Let us denote
$P(z) = \sum_{i=0}^t P_i z^i$, \, $Q(z)= \sum_{i=0}^u Q_iz^i$.
Then  using \eqref{differenssi} we obtain
\[\alpha f_{\nu-s} = \sum_{i=0}^t P_iq^{\nu-i}f_{\nu-i} + Q_\nu,\]
where $f_\nu = 0$ for all $\nu < 0$ and $Q_\nu =0$ for all $\nu > u$. 
Hence we get a recursion formula 
\begin{equation}\label{coeff_recur}
P_0q^\nu f_\nu = \alpha f_{\nu-s} - \sum_{i=1}^t P_iq^{\nu-i}f_{\nu-i} - Q_\nu,
\end{equation} 
which defines the coefficients $f_\nu$ uniquely.
From \eqref{coeff_recur} with $|q|> 1$ it follows that
\[|f_\nu|  \leq C_1 \max\{1, |f_0|, \ldots, |f_{\nu-1}|\},\]
where $C_1$ (as $C_2$, $C_3$, \ldots later) is a positive constant depending only on $s, q, \alpha, P(z)$ and $Q(z)$.
This implies that 
\begin{equation} \label{upper_f}
|f_\nu| \leq C_1^{\nu + 1}
\end{equation}
and consequently  $f(z)$ converges in  a neighbourhood of the origin. 
We denote by $D$ the disk in $\C$, where $f(z)$
converges. Next we show that by using \eqref{differenssi} repeatedly $f(z)$ can be continued meromorphically beyond $D$, to whole $\C$.
Let $\mathcal{P}$ be the set defined by 
\[\mathcal{P} = \{q^k\beta | P(\beta) =0, k\geq 1\}.\]
Using \eqref{differenssi} we get
\[
f(z)
    = \left(\prod_{i=1}^k \frac{\alpha(q^{-i}z)^s}{P(q^{-i}z)}\right) f(q^{-k}z) - \sum_{i=1}^k \left(\prod_{j=1}^{i-1}\frac{\alpha(q^{-j}z)^s}{P(q^{-j}z)}\right) \frac{Q(q^{-i}z)}{P(q^{-i}z)}
\]
for any $z\in (\C\setminus \mathcal{P}) \cap D$. Since the first product tends to zero as $k \to \infty$, we get
\begin{equation}\label{diffsol}
f(z)=-\sum_{i=1}^\infty \frac{q^{-si(i-1)/2}Q(zq^{-i})}{P(zq^{-1})\cdots P(zq^{-i})}(\alpha z^s)^{i-1}.
\end{equation}
If we choose $Q(z) = -P(z)$, we get 
$f(q)=\varphi(\alpha)$. Thus we can consider the linear independence of $\varphi(\alpha_1),\ldots, \varphi(\alpha_m)$ by considering a system of difference equations of type \eqref{differenssi}.

In the next section we use Thue-Siegel lemma to get Padé-type approximations of the second kind for the functions $f_i(z)$. We need equations with integer coefficients and therefore we define 
\begin{equation}\label{iso-F}
F_{\nu} = P_0^{\nu+1}q^{\nu(\nu+1)/2}f_{\nu}.
\end{equation}
From  the recurrence formula \eqref{coeff_recur}  and the definition \eqref{iso-F} we see that
\begin{equation}\label{deg_F}
\begin{split}
 &F_\nu \in \Z[\alpha, P_0, \ldots, P_t, Q_0,\ldots, Q_u, q], \\
\deg_{\alpha}F_{\nu} \leq \nu, \quad 
&\deg_{P_0,\ldots,P_t, Q_0,\ldots, Q_u} F_\nu \leq \nu +1, \quad 
\deg_q F_\nu \leq \nu(\nu+ 1)/2.
\end{split}
\end{equation}

In the third section we construct more Padé-type approximations by an iteration process. In that process we need the
following lemma, which was already presented in  \cite[(11)]{VZ}. 
\begin{lemma} \label{iteration}
The iteration equation

\[(\alpha z^s)^kq^{uk} f(zq^{-k}) = X_k(z,q)f(z) + Y_k(z,q),
\]
where 
\[ X_k(z,q)= q^{sk(k+1)/2 +uk}\prod_{j=1}^k P(zq^{-j})\]
(is independent of $\alpha$ and $Q(z)$) and
\[ Y_k(z,q)= \sum_{j=1}^k (\alpha z^s)^{j-1}q^{s(k(k+1)/2-j(j-1)/2)+uk}Q(zq^{-j})\prod_{l=j+1}^k P(zq^{-l})\]
holds.
\end{lemma}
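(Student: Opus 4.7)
The plan is a direct induction on $k$, using the $q$-difference equation \eqref{differenssi} as the engine that advances from step $k$ to step $k+1$.

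For the base case $k=1$, I would substitute $z\mapsto zq^{-1}$ in \eqref{differenssi} to obtain $\alpha z^s q^{-s} f(zq^{-1}) = P(zq^{-1})f(z) + Q(zq^{-1})$, and then multiply by $q^{s+u}$. Checking against the claimed formulas gives $X_1(z,q)=q^{s+u}P(zq^{-1})$ and $Y_1(z,q)=q^{s+u}Q(zq^{-1})$, which agree with the stated expressions (where the product $\prod_{l=2}^{1}P(zq^{-l})$ is empty and thus equals $1$).

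For the inductive step, assume the claimed identity holds at level $k$. Apply \eqref{differenssi} with $z$ replaced by $zq^{-k-1}$ to solve for $f(zq^{-k-1})$ as
\[
f(zq^{-k-1}) = \frac{q^{s(k+1)}}{\alpha z^s}\bigl(P(zq^{-k-1})f(zq^{-k}) + Q(zq^{-k-1})\bigr),
\]
multiply through by $(\alpha z^s)^{k+1}q^{u(k+1)}$, and replace $(\alpha z^s)^{k}q^{uk}f(zq^{-k})$ by $X_k(z,q)f(z)+Y_k(z,q)$ using the inductive hypothesis. This yields three summands:
\[
q^{u+s(k+1)}P(zq^{-k-1})X_k(z,q)f(z) + q^{u+s(k+1)}P(zq^{-k-1})Y_k(z,q) + (\alpha z^s)^{k}q^{(u+s)(k+1)}Q(zq^{-k-1}).
\]
The first summand matches $X_{k+1}(z,q)f(z)$ by tracking the exponent of $q$ as $u+s(k+1)+sk(k+1)/2+uk = u(k+1)+s(k+1)(k+2)/2$ and extending the product $\prod_{j=1}^{k}P(zq^{-j})$ by the factor $P(zq^{-(k+1)})$. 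The last summand is exactly the $j=k+1$ term of $Y_{k+1}(z,q)$ (the exponent simplification $s((k+1)(k+2)/2-k(k+1)/2)=s(k+1)$ is immediate), while the middle summand reproduces the $j\leq k$ terms of $Y_{k+1}$ after the same exponent-bookkeeping.

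The only real work is the exponent bookkeeping, and the only potential pitfall is being careful with the empty product convention when $k=1$ and the telescoping of the $q$-exponents $s(k+1) + sk(k+1)/2 = s(k+1)(k+2)/2$; no convergence or analytic issues arise since the identity is an algebraic consequence of \eqref{differenssi} applied $k$ times.
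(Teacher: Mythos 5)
Your induction is correct and is precisely the approach the paper intends: the paper states only that the lemma is ``a direct corollary of the difference equation \eqref{differenssi}'' (i.e.\ iterate it $k$ times) and gives no further detail, so your exponent bookkeeping for $X_k$ and $Y_k$ simply spells out what is implicit there.
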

Lemma \ref{iteration} is a direct corollary of the difference equation \eqref{differenssi}. Further it  
gives us the upper bound
\begin{equation}\label{bound_iterx}
|X_k(z,q)|\le C_2^k|q|^{sk(k+1)/2}\max\{1, |z|\}^{C_3 k}.
\end{equation}


\section{Pad\'e-type Approximations}

Let $\alpha_1, \ldots, \alpha_m\in K\setminus\{0\}$ and consider a system of difference equations
\begin{equation}\label{differenssit}
\alpha_iz^sf_i(z)= P(z)f_i(qz) + Q_i(z), \quad i=1,\ldots, m.
\end{equation}
 Let
\[f_i(z)=\sum_{\nu=0}^\infty f_{i\nu}z^\nu, \quad i=1,\ldots, m,\]
be the analytic solution of \eqref{differenssit}. We shall construct Padé-type approximations of the second kind for these functions similarly as in \cite{VZ}.

Let $n_1,\ldots, n_m$ be positive integers and $N=n_1+\ldots + n_m$. Let us choose such $\delta$, which satisfies \, $0< \delta < 1/m$ \, and
\begin{equation}
n_i \geq \delta N, \quad i=1,\ldots, m.
\end{equation}
We are looking for a polynomial
\begin{equation} \label{apppol}
A(z) = \sum_{\mu=0}^N \frac{a_\mu}{q^{\mu(\mu-1)/2}}z^\mu\not\equiv 0
\end{equation}
with integer coefficients $a_\mu\in\Z_K$ such that for all $i=1,\ldots, m$ the expansion 
\[A(z)f_i(z) = \sum_{k=0}^\infty b_{ik}z^k\]
satisfies the conditions $b_{ik}=0$ for $k=N+1, N+2,\ldots, N+n_i-[\delta N]-1.$ 
We have 
\[\begin{split}
A(z)f_i(z)  
			&= \sum_{k=0}^\infty \sum_{\substack{\nu=0\\ \nu \geq k-N}}^k \frac{f_{i\nu}a_{k-\nu}}{q^{(k-\nu)(k-\nu-1)/2}}z^k\\
           &= \sum_{k=0}^\infty \sum_{\substack{\nu=0 \\ \nu \geq k-N}}^k \frac{F_{i\nu}a_{k-\nu}}{P_0^{\nu+1}q^{k(k-1)/2 + \nu(\nu-k+1)}}
z^k,\end{split}\]
where analogously to \eqref{iso-F} $F_{i\nu}= P_0^{\nu+1}q^{\nu(\nu+1)/2} f_{i\nu}$. Thus the condition $b_{ik}=0$ for $k>N$ is equivalent to
\begin{equation}\label{toPade}
\sum_{\nu=k-N}^k P_0^{k-\nu}q^{(\nu+1)(k-\nu)}F_{i\nu}a_{k-\nu}=0.
\end{equation}
We choose now natural numbers $A$ and $B$ to be such that \,$A\alpha_i\in \Z_{K}$ \, and \, $BP(z)$, $BQ_i(z) \in \Z_{K}[z]$.
Due to  \eqref{deg_F} we get a linear equation in $a_\mu$ which has integer coefficients from $K$, if we multiply the equation \eqref{toPade} by $(AB^2)^kb^{k(k+1)/2}$. Using \eqref{upper_f} and \eqref{iso-F} we see that the integer coefficients of this equation  satisfy the condition
\[\begin{split}
|coeff| &\leq C_4^k |b|^{k(k+1)/2} \max_{k-N\leq \nu \leq k}\{|q|^{(\nu+1)(k-\nu)+\nu(\nu+1)/2}\}\leq C_4^k |bq|^{k(k+1)/2},
\end{split}\]
which can be written in the form
 \[ |coeff| \leq C_5^k |a|^{k^2/2}.\]
We need the condition $b_{ik}= 0$ for $k=N+1, N+2, \ldots, N+ n_i-[\delta N] -1$ and for these $k$ we have
\[\begin{split}
k &\leq N+ n_i -\delta N = N + (N- n_1-\cdots-n_{i-1}-n_{i+1}-\cdots-n_m) -\delta N \\
  &\leq 2N -m\delta N.
\end{split}\]
Thus the absolute values of coefficients are bounded by
\[C_6^N |a|^{(2N- m\delta N)^2/2}.\]

In order to get the Padé-type approximations we need to use Thue-Siegel lemma, which is  presented below and proved in \cite[Chapter 3, Lemma 13]{Sh}.

\begin{lemma}[Thue-Siegel lemma]
Let $B,M\in \Z$ satisfy the condition $B>M>0$. Furthermore, let 
$a_{ij}\in \Z_K$, $1\le i \le M, 1\le j\le B$ satisfy the condition  
$|a_{ij}|\le U$ with $U \in \R_+$. Then there exists a non-trivial $x_1,\ldots, x_B\in \Z_K$ such that 
\[\sum_{j=1}^B a_{ij}x_j=0, \quad i=1,\ldots,M,\]
and
\[|x_j|\le c_K(c_K BU)^{\frac{M}{B-M}}, \quad j=1,\ldots,B,\]
where $c_K$ is a constant depending only on $K$.
\end{lemma}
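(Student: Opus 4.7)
\medskip

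The plan is to prove the Thue--Siegel lemma by a classical pigeonhole (Dirichlet box principle) argument, exactly as Siegel did for the rationals, but carefully accounting for the two-dimensional lattice structure of $\Z_K$ when $K$ is imaginary quadratic. Let $d=[K:\Q]\in\{1,2\}$. The idea is to cook up two distinct tuples $(x_1,\ldots,x_B)\in\Z_K^B$ in a bounded box whose images under the linear map defined by the matrix $(a_{ij})$ coincide; their coordinate-wise difference is then the required non-trivial solution.

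First, I would fix a real parameter $X>0$ and consider the set $S_X=\{x\in\Z_K:|x|\le X\}$. Since $\Z_K$ is a full-rank lattice in $\R^d$, a standard geometry-of-numbers estimate gives $|S_X|\ge \kappa_1 X^d$ for some constant $\kappa_1=\kappa_1(K)>0$ (for $K=\Q$ this is just $|S_X|\ge 2X+1$). Hence the number of candidate tuples is at least $(\kappa_1 X)^{dB}$. Second, for any such tuple and any $i$, the value $y_i:=\sum_{j=1}^B a_{ij}x_j$ lies in $\Z_K$ and satisfies $|y_i|\le BUX$, so $(y_1,\ldots,y_M)$ lies in a set of cardinality at most $(\kappa_2 BUX)^{dM}$ for another constant $\kappa_2=\kappa_2(K)$ (because a disk of radius $R$ in $\C$ contains $O(R^2)$ lattice points of $\Z_K$, with an explicit constant).

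Third, I would choose $X$ so that
\[
(\kappa_1 X)^{dB} > (\kappa_2 BUX)^{dM},
\]
which, after taking $d$-th roots and rearranging, reduces to $X^{B-M} > \kappa_3^B (BU)^M$ for a suitable $\kappa_3=\kappa_3(K)$. Taking $X=\lceil c_K(c_K BU)^{M/(B-M)}\rceil$ with $c_K$ large enough makes this strict. By the pigeonhole principle, two distinct tuples $x',x''\in S_X^B$ then give the same vector of linear forms, and $x_j:=x'_j-x''_j$ furnishes a non-trivial solution with $|x_j|\le 2X\le c_K(c_K BU)^{M/(B-M)}$ after absorbing the factor of $2$ into $c_K$.

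The only non-routine point is the second step: obtaining clean counts $\kappa_1 X^d$ and $\kappa_2 R^d$ for the number of integers of $K$ in a disk. For $K=\Q$ this is trivial; for imaginary quadratic $K$ it follows from a Minkowski-type lattice-point count applied to the lattice $\Z_K\subset\C$, but one must be a bit careful with the boundary terms when $X$ is small, which is why the bound is stated with a field-dependent constant $c_K$ rather than an absolute one. Once the counting constants are pinned down, choosing $X$ optimally is a one-line exercise and produces precisely the bound stated in the lemma.
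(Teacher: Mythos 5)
The paper does not prove this lemma; it merely states it and refers the reader to Shidlovskii, \cite[Chapter 3, Lemma 13]{Sh}, so there is no ``paper's own proof'' to compare against. Your pigeonhole argument is precisely the standard way this lemma is established in the literature (including in Shidlovskii), and your sketch is essentially correct: enumerate candidate tuples in a bounded region of $\Z_K^B$, bound the number of possible image vectors $(y_1,\ldots,y_M)$, make the former exceed the latter by the Dirichlet box principle, and take a difference of two colliding tuples.

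Two technical points are worth tightening. First, the lower bound $|S_X|\ge \kappa_1 X^d$ for lattice points in a disk is false for small $X$ (a disk of radius below the shortest nonzero vector of $\Z_K$ contains only $0$), which you do acknowledge. The cleanest cure, and the one used in textbook proofs, is to replace the disk by the box $\{a+b\omega : a,b\in\{0,1,\dots,T\}\}$ with $\omega$ a $\Z$-basis generator of $\Z_K$; this set has exactly $(T+1)^d$ elements with no boundary issues, and every element has absolute value $\le (1+|\omega|)T$, which is all the argument needs. Second, for the upper count the number of lattice points in a disk of radius $R$ should be bounded by $\kappa_2(1+R)^d$ rather than $\kappa_2 R^d$ to absorb the additive boundary term; once $R\ge 1$ this can again be folded into $c_K$. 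With these adjustments the optimization of $T$ (or $X$) delivers exactly the stated exponent $M/(B-M)$, and absorbing the factor $2$ from the final subtraction into $c_K$ is legitimate since the exponent is positive. Your proposal is a valid proof of the cited lemma.
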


The number of the linear equations $b_{ik}=0$ is equal to 
\[\sum_{i=1}^m(n_i -[\delta N] -1) = N -m([\delta N] +1)\]
and the number of indeterminates $a_\mu$ is $N+1$. 
Hence Thue-Siegel lemma 
yields the existence of integers $a_\mu \in \Z_K$, not all zero, such that
\begin{equation}\label{coeff_bound}
|a_\mu| 
        \leq C_7^N |a|^{\gamma_1 N^2}, \quad \gamma_1=\frac{(2-m\delta)^2(1-m\delta)}{2m\delta}.
\end{equation}

\vspace{2 em}

Let us define $B_i(z)=\sum_{k=0}^N b_{ik}z^k$ and $R_i(z)= \sum_{k=N+n_i-[\delta N]}^\infty b_{ik}z^k$. 
If $k\leq N$, then
\[b_{ik} = \sum_{\nu=0}^k \frac{F_{i\nu}a_{k-\nu}}{P_0^{\nu+1}q^{k(k-1)/2 + \nu(\nu-k+1)}}=\frac{1}{q^{k(k+1)/2}}\sum_{\nu=0}^k \frac{F_{i\nu}a_{k-\nu}q^{(\nu+1)(k-\nu)}}{P_0^{\nu+1}}.\]       It follows that polynomials 
\[a^{N(N+1)/2}(AB^2P_0)^{N+1}B_i(z)\]
have integer coefficients in  $K$.
%
By \eqref{upper_f} and \eqref{coeff_bound}, for all $k> N$, the following estimates hold
\[\begin{split}
|b_{ik}|&= \left|\sum_{\nu=k-N}^k \frac{f_{i\nu}a_{k-\nu}}{q^{(k-\nu)(k-\nu-1)/2 }}\right|\leq C_1^{k+1}C_7^N |a|^{\gamma_1 N^2}\sum_{\nu=k-N}^k\frac{1}{|q|^{(k-\nu)(k-\nu-1)/2}}\\
        &\leq C_8^k|a|^{\gamma_1 N^2}.
\end{split}\]
We obtain for all $|z|<(2C_8)^{-1} $ that
\begin{equation}\label{reminder}
|R_i(z)| \leq |\sum_{k=N_i}^\infty b_{ik}z^k| \leq |a|^{\gamma_1 N^2} \sum_{k=N_i}^\infty (C_8|z|)^k 
\leq C_{9}^N |a|^{\gamma_1 N^2}|z|^{N_i},
\end{equation}
where $N_i= N+ n_i - [\delta N]$.
We have thus proved the following lemma.
\begin{lemma}\label{pade-poly}
There exists a polynomial 
\[A(z)=\sum_{\mu=0}^N \frac{a_\mu}{q^{\mu(\mu-1)/2}}z^\mu\]
with  $a_\mu\in \Z_{K}$ satisfying 
\[|a_{\mu}|\leq C_7^N|a|^{\gamma_1 N^2}, \quad \gamma_1 =\frac{(2-m\delta)^2(1-m\delta)}{2m\delta}\]
 and  polynomials 
\[B_i(z)=\sum_{k=0}^N b_{ik}z^{k}\]
such that
\[A(z)f_i(z) - B_i(z) = 
R_i(z),\]
where the forms $R_i(z)$ satisfy the conditions $\ord_{z=0}R_i(z)\ge N+n_i-[\delta N]$ and
\[|R_i(z)|\leq C_{9}^N |a|^{\gamma_1 N^2}|z|^{N_i}\]
for all $|z|< (2C_8)^{-1}$. 
Further the polynomials
\[ a^{N(N-1)/2}A(z), \quad  a^{N(N+1)/2}(AB^2P_0)^{N+1}B_i(z)\]
have integer coefficients. 
\end{lemma}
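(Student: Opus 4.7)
The plan is to construct $A(z)$ by the Thue--Siegel lemma and then read off $B_i(z)$ and $R_i(z)$ directly from the expansion of $A(z)f_i(z)$. First I would write $A(z)=\sum_{\mu=0}^N a_\mu q^{-\mu(\mu-1)/2} z^\mu$ with unknown coefficients $a_\mu\in\Z_K$, multiply by $f_i(z)=\sum f_{i\nu}z^\nu$, and collect terms to get an expression of the form $A(z)f_i(z)=\sum_k b_{ik}z^k$ in which each $b_{ik}$ is a $K$-linear combination of $F_{i\nu}a_{k-\nu}$ (as already derived in the text). The condition that the middle block of coefficients $b_{ik}$ vanishes for $N<k\le N+n_i-[\delta N]-1$ then becomes the homogeneous linear system \eqref{toPade} in the $a_\mu$.

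Next I would clear denominators. Multiplying each equation by $(AB^2)^k b^{k(k+1)/2}$ turns its coefficients into elements of $\Z_K$, and using \eqref{deg_F} together with \eqref{upper_f} I would estimate each such coefficient by $C_5^k |a|^{k^2/2}$. Since $k\le 2N-m\delta N$ throughout, the common bound $U\le C_6^N |a|^{(2-m\delta)^2 N^2/2}$ holds. The number of equations is $M=N-m([\delta N]+1)$ and the number of unknowns is $B=N+1$, so $B-M$ is essentially $m\delta N$. Feeding this into the Thue--Siegel lemma I would obtain a non-trivial integer solution with
\[
|a_\mu|\le c_K(c_K B U)^{M/(B-M)} \le C_7^N |a|^{\gamma_1 N^2},\qquad \gamma_1=\frac{(2-m\delta)^2(1-m\delta)}{2m\delta},
\]
which is exactly \eqref{coeff_bound}.

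Having $A(z)$, I would define $B_i(z)=\sum_{k=0}^N b_{ik}z^k$ and $R_i(z)=\sum_{k\ge N_i} b_{ik}z^k$ with $N_i=N+n_i-[\delta N]$; by construction the chunk $N<k<N_i$ vanishes, so $A(z)f_i(z)-B_i(z)=R_i(z)$ with $\ord_{z=0}R_i\ge N_i$. Integrality of $a^{N(N-1)/2}A(z)$ and of $a^{N(N+1)/2}(AB^2P_0)^{N+1}B_i(z)$ would then follow by inspecting the denominators $q^{\mu(\mu-1)/2}$ appearing in $A(z)$ and the extra $P_0^{\nu+1}q^{k(k-1)/2+\nu(\nu-k+1)}$ that appear in $b_{ik}$ for $k\le N$, exactly as in the computation preceding the lemma.

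For the remainder estimate I would simply bound each $b_{ik}$ with $k>N$ by combining \eqref{upper_f} (which gives $|f_{i\nu}|\le C_1^{\nu+1}$) with the coefficient bound \eqref{coeff_bound}; the geometric factor $1/|q|^{(k-\nu)(k-\nu-1)/2}$ keeps the sum over $\nu$ bounded, yielding $|b_{ik}|\le C_8^k |a|^{\gamma_1 N^2}$. For $|z|<(2C_8)^{-1}$ the tail $\sum_{k\ge N_i}(C_8|z|)^k$ is a convergent geometric series dominated by $2(C_8|z|)^{N_i}$, which produces the stated bound $|R_i(z)|\le C_9^N |a|^{\gamma_1 N^2}|z|^{N_i}$. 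The main (though routine) obstacle is the careful bookkeeping of $q$- and $a$-denominators to confirm integrality after multiplication by $a^{N(N\pm 1)/2}(AB^2P_0)^{N+1}$; the rest is a direct application of Thue--Siegel with the estimates already assembled.
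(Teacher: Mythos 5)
Your proposal follows the paper's own construction essentially step by step: the same expansion of $A(z)f_i(z)$, the same clearing of denominators by $(AB^2)^k b^{k(k+1)/2}$, the same coefficient bound $C_6^N|a|^{(2-m\delta)^2N^2/2}$ fed into Thue--Siegel with $M=N-m([\delta N]+1)$ equations and $N+1$ unknowns to get $\gamma_1$, the same definitions of $B_i$ and $R_i$, the same integrality bookkeeping, and the same geometric-tail estimate for $R_i$. The argument is correct and is not a genuinely different route from the paper's.
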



\section{Iteration Process}

Let us denote now
\[A_0(z)= A(z), \quad B_{0i}(z) = B_i(z), \quad R_{0i}(z) = R_i(z).\]
Due to Lemma \ref{pade-poly} we have the equation
\[A_0(z)f_i(z)-B_{0i}(z)= R_{0i}(z).\]
If we operate this equation by the $q$-sift operator $J_z$ (where $J_zf(z)=f(qz)$) and 
after that apply  the $q$-difference equation \eqref{differenssit}, we get the equality
\[A_0(qz)(\alpha_iz^s f_i(z)-Q_i(z)) -P(z)B_{0i}(qz)= P(z)R_{0i}(qz),\]
which implies immediately
\[z^sA_0(qz)f_i(z) - \alpha_i^{-1}(A_0(qz)Q_i(z)+P(z)B_{0i}(qz))=\alpha_i^{-1}P(z)R_{0i}(qz).\]

Starting from $A_0(z)$  and $B_{0i}(z)$ 
we build further Padé-type approximations by iterating the above process (similarly as in \cite{VZ}). We can define
\begin{equation}\label{rec1}
\begin{split}
A_j(z)&= z^s A_{j-1}(qz),\\ 
B_{ji}(z)&= \alpha_i^{-1}(A_{j-1}(qz)Q_i(z)+ P(z)B_{j-1,i}(qz)), 
\end{split}
\end{equation}
where $i=1,\ldots, m$, $j=1,2,\ldots$ . 
Then the equation
\begin{equation}\label{rec2}
A_j(z)f_i(z) - B_{ji}(z)= R_{ji}(z)
\end{equation}
holds for all $i=1,\ldots, m$, $j=0,1,\ldots$,
where
\begin{equation}\label{rec3}
R_{ji}(z) =\alpha_i^{-1}P(z)R_{j-1,i}(qz).
\end{equation}

Next we will consider the determinant
\begin{equation} \label{determ}
\begin{split}
\Delta(z) &= \det\left( \begin{matrix}A_0(z) & B_{01}(z)& \cdots & B_{0m}(z)\\
                                      A_1(z) & B_{11}(z) & \cdots & B_{1m}(z)\\                                      \cdots & \cdots    & \cdots   &  \cdots\\                                      A_m(z) & B_{m1}(z) & \cdots & B_{mm}(z)\end{matrix}\right)
\\ 
&= (-1)^m\det\left(\begin{matrix}A_0(z) & R_{01}(z) & \cdots & R_{0m}(z)\\
                                           A_1(z) & R_{11}(z) & \cdots & R_{1m}(z) \\
                                           \cdots & \cdots & \cdots & \cdots \\
                                           A_m(z) & R_{m1}(z) & \cdots & R_{mm}(z) \end{matrix}\right).
\end{split}\end{equation}

\begin{lemma}
Suppose  that  none of the functions  $f_i(z)$ is a polynomial and $\alpha_i\ne \alpha_jq^l$  for all $i \ne j$ and $l\in \Z$.   Then  $\Delta(z) \not\equiv 0$ (see \cite[Lemma 3]{Va2}).
\end{lemma}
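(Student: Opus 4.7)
I proceed by contradiction, assuming $\Delta(z)\equiv 0$. Then the row vectors $V_j := (A_j(z), B_{j1}(z),\ldots,B_{jm}(z))$, $j=0,\ldots,m$, are linearly dependent over $\C(z)$. Observe that the recursion \eqref{rec1} can be written uniformly as $V_j(z) = V_{j-1}(qz)\,M(z)$, where $M(z)$ is the fixed $(m+1)\times (m+1)$ matrix with first row $(z^s,\alpha_1^{-1}Q_1(z),\ldots,\alpha_m^{-1}Q_m(z))$ and $(i+1)$-th row $(0,\ldots,\alpha_i^{-1}P(z),\ldots,0)$. Let $k\le m$ be minimal such that $V_0,\ldots,V_k$ are dependent, and write $V_k = \sum_{j=0}^{k-1}c_j(z)V_j$ with $c_j \in \C(z)$. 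Replacing $z$ by $qz$ and right-multiplying by $M(z)$ yields $V_{k+1}(z) = \sum_{j=0}^{k-1}c_j(qz)V_{j+1}(z)$, and substituting $V_k$ back shows $V_{k+1}\in W := \mathrm{span}_{\C(z)}(V_0,\ldots,V_{k-1})$. By induction $V_j\in W$ for every $j\ge 0$, so the whole family lies in a $\C(z)$-subspace of dimension at most $m$.

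Choose a non-zero $(\lambda_0,\ldots,\lambda_m)\in\C[z]^{m+1}$ annihilating $W$ (clear denominators to make the $\lambda_i$ polynomials). Then $\lambda_0 A_j+\sum_{i=1}^m\lambda_i B_{ji}=0$ for every $j$. Substituting $B_{ji}=A_jf_i-R_{ji}$ and setting $\Phi(z):=\lambda_0(z)+\sum_{i=1}^m\lambda_i(z)f_i(z)$ gives the master identity
\[
A_j(z)\,\Phi(z)=\sum_{i=1}^m\lambda_i(z)\,R_{ji}(z)\qquad(j=0,1,\ldots).
\]
Iterating \eqref{rec1} and \eqref{rec3} gives the closed forms $A_j(z)=z^{js}q^{sj(j-1)/2}A_0(q^jz)$ and $R_{ji}(z)=\alpha_i^{-j}\prod_{l=0}^{j-1}P(q^lz)\cdot R_{0i}(q^jz)$. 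Since $A_0\not\equiv 0$, the order of vanishing of $A_j$ at $z=0$ is $sj+\mu_0$ for a constant $\mu_0\ge 0$, so $\mathrm{ord}_{z=0}(A_j\Phi)\to\infty$. The right-hand side, however, has $\mathrm{ord}_{z=0}R_{ji}=\mathrm{ord}_{z=0}R_{0i}$ independent of $j$; isolating the leading coefficient at $z^{N_i^*}$ (where $N_i^*:=\mathrm{ord}_{z=0}R_{0i}$) and using the distinct $j$-growth rates $(P(0)q^{N_i^*}/\alpha_i)^j$ forces, coefficient by coefficient, $\Phi(z)\equiv 0$.

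We have thus obtained a non-trivial $\C(z)$-linear relation $\lambda_0(z)+\sum_{i=1}^m\lambda_i(z)f_i(z)=0$, with at least one $\lambda_i$ ($i\ge 1$) non-zero (otherwise $\lambda_0 A_j\equiv 0$ forces $A_0\equiv 0$). Apply the shift $z\mapsto qz$ and use \eqref{differenssit} to eliminate $f_i(qz)=(\alpha_iz^sf_i(z)-Q_i(z))/P(z)$; this produces a second relation $\sum_{i=1}^m\alpha_iz^s\lambda_i(qz)\,f_i(z)\in\C(z)$. If the two relations are $\C(z)$-proportional, then for any two indices $i_1\ne i_2$ with $\lambda_{i_1},\lambda_{i_2}\ne 0$, the rational function $h:=\lambda_{i_1}/\lambda_{i_2}$ satisfies $h(qz)=(\alpha_{i_2}/\alpha_{i_1})\,h(z)$, whence $h(z)=cz^l$ and $\alpha_{i_2}=\alpha_{i_1}q^l$, contradicting $\alpha_i\ne\alpha_jq^l$. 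If the two relations are independent, Gaussian elimination produces a relation with strictly fewer non-zero $\lambda_i$'s; iterating, one reaches a one-term relation $\lambda_if_i\in\C(z)$, so $f_i$ is rational. But \eqref{differenssit} with $P(0)\ne 0$ forces any rational solution to be a polynomial (a pole at any $z_0\ne 0$ would propagate to the infinite orbit $\{q^nz_0\}_{n\ge 0}$), contradicting the hypothesis that no $f_i$ is a polynomial.

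The hard part is the order-comparison step that produces $\Phi\equiv 0$; it requires isolating the leading $z^{N_i^*}$ coefficients on the right-hand side of the master identity and ruling out accidental cancellations using the distinctness of the growth factors in $j$. Everything else is a standard exploitation of the $q$-difference structure together with the separation condition on the $\alpha_i$.
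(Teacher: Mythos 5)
Your proof is correct, but it takes a genuinely different route from the paper's. The paper argues directly: after observing (via the $P(0)\ne 0$ propagation argument) that each $f_i$ is not rational, so $R_{0i}\not\equiv 0$, it computes the coefficient of the lowest-degree term $z^{k+k_1+\cdots+k_m}$ of $\Delta(z)$ explicitly, factoring it as
\[
aP_0^{m(m+1)/2}\prod_{i=1}^m(\alpha_i^{-1}r_iq^{k_i})\prod_{1\le i<j\le m}\bigl(\alpha_j^{-1}q^{k_j}-\alpha_i^{-1}q^{k_i}\bigr),
\]
a Vandermonde determinant in the quantities $\alpha_i^{-1}q^{k_i}$, which is non-zero precisely because $\alpha_i\ne\alpha_jq^l$. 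You argue instead by contradiction: assuming $\Delta\equiv 0$, you exploit the fact that \eqref{rec1} can be packaged as $V_j(z)=V_{j-1}(qz)M(z)$ to propagate the row-dependence to all $j$, obtain a polynomial annihilator $(\lambda_0,\dots,\lambda_m)$, derive the master identity $A_j\Phi=\sum_i\lambda_iR_{ji}$, and then use an order-in-$z$ versus growth-in-$j$ comparison followed by a $q$-shift elimination scheme. Both proofs ultimately hinge on the same two ingredients — (i) rational solutions of \eqref{differenssit} with $P(0)\ne 0$ must be polynomials, and (ii) the separation condition $\alpha_i\ne\alpha_jq^l$ makes a Vandermonde-type object non-degenerate — but the paper applies them in one explicit computation, whereas you apply them through a longer contrapositive chain.

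Two remarks on economy, not correctness. First, once you have $A_j\Phi=\sum_i\lambda_iR_{ji}$, the "distinct growth rates $(P(0)q^{N_i^*}/\alpha_i)^j$" argument you invoke to force $\Phi\equiv 0$ actually delivers a contradiction outright: if $\Phi\equiv 0$ the coefficient of the minimal power $z^M$ on the right gives $\sum_{i}\lambda_i^{(0)}r_i\rho_i^j=0$ for all $j$ with pairwise distinct $\rho_i$, forcing $\lambda_i^{(0)}r_i=0$; and if that index set is empty, then $R_{0i}\equiv 0$ for every contributing $i$, whence $f_i=B_{0i}/A_0$ is rational and hence (by the propagation argument) a polynomial. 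Either way you are done, so the entire Gaussian-elimination paragraph that follows is redundant — though it is an independently valid way to finish. Second, the step producing $\Phi\equiv 0$ tacitly treats $N_i^*=\mathrm{ord}_{z=0}R_{0i}$ as finite; you should say explicitly that if $R_{0i}\equiv 0$ the corresponding term simply drops out of the sum, and that if all terms drop out then $\Phi\equiv 0$ follows trivially from $A_j\not\equiv 0$. With those small clarifications the argument is airtight.
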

\begin{proof}[Proof]
If $z_0$ is an element at which $f_i(z)$ does not have a limit, then $f_i(z)$ does not have a limit at points $z= q^kz_0$ too, since $f_i(z)$ can be expressed in the form \eqref{diffsol}. This implies that $f_i(z)$ has either none or infinitely many poles.  
Hence if $f_i(z)$ is a rational function it has to be a polynomial.

Because we assumed that $f_i(z)$ is not a polynomial, it is neither a rational function and hence  $R_{0i}(z)\not\equiv 0$ $(i=1,...,m)$.
 Let 
\[az^k, \quad r_iz^{k_i}\quad (ar_1\cdots r_m \neq 0)\] 
denote the lowest degree terms of $A(z)$ and $R_{0i}(z)$. We also know that $P(0)=P_0\neq 0$.
From the expression of $\Delta(z)$ in \eqref{determ} and the recursive formulae \eqref{rec1} 
 and \eqref{rec3} 
 it follows that
\[\ord \Delta(z) \geq k+ k_1 +\ldots + k_m.\]
By using the determinant of the coefficients of these lowest degree terms
\[\begin{split}
&\left|\begin{matrix} a & r_1 & \cdots & r_m\\
                     0 & {\alpha_1}^{-1}P_0r_1q^{k_1} & \cdots & {\alpha_m}^{-1}P_0r_mq^{k_m} \\
                      \cdots                           &         &                             \\
                      0 & {\alpha_1}^{-m}P_0^m r_1 q^{mk_1} & \cdots & {\alpha_m}^{-m}P_0^m r_m q^{mk_m} \end{matrix}\right|\\
                      &= aP_0^{m(m+1)/2}\prod_{i=1}^m(\alpha_i^{-1}r_iq^{k_i})
                      \left|\begin{matrix} 1 &  \cdots & 1 \\
                                           \alpha_1^{-1}q^{k_1} & \cdots & \alpha_m^{-1}q^{k_m} \\
                                           \cdots & & \\
                                           (\alpha_1^{-1}q^{k_1})^{m-1} & \cdots & (\alpha_m^{-1}q^{k_m})^{m-1} \end{matrix}\right|\\
                      &= aP_0^{m(m+1)/2}\prod_{i=1}^m(\alpha_i^{-1}r_iq^{k_i})\prod_{1\leq i <j\leq m}(\alpha_j^{-1}q^{k_j} - \alpha_i^{-1}q^{k_i})
\end{split}\]
 we see that the coefficient of $z^{k+ k_1 + \ldots + k_m}$ in $\Delta(z)$ is non-zero, since $\alpha_i/\alpha_j \neq q^l$, $l\in \Z$.   This proves our lemma.                   
\end{proof}

Since by Lemma \ref{pade-poly}, \eqref{rec1}, \eqref{rec3} and \eqref{determ}  we have
\[\ord \Delta(z) \geq N_1 + \ldots + N_m = \sum_{i=1}^m (N+ n_i-[\delta N]) 
\geq (m+1)N -m\delta N \]
and 
\[
\deg \Delta(z) 
\leq (m+1)N + S\frac{m(m+1)}{2},
\]
where $S=\max\{s, \deg Q_i(z)\}$, we deduce that $\Delta(z)$ has at most
\[\deg \Delta(z) - \ord \Delta(z) \leq m\delta N + S\frac{m(m+1)}{2}\]
non-zero zero-points. If we take any $\alpha \in K\setminus \{0\}$, then
for each $\rho > m\delta$ there exists an integer $k$
satisfying
\begin{equation}\label{k1}
(\rho-m\delta)N - S\frac{m(m+1)}{2} -1 < k \leq \rho N
\end{equation}
 and
\begin{equation}\label{k2}
\Delta(\alpha q^{-k})\ne 0.
\end{equation}

Let $\alpha\in K\setminus \{0\}$ be an element which satisfies the condition $P(\alpha q^{-k}) \neq 0$ for $k=1,2,...$.
Then $f_i(\alpha q^{-k})$ is defined for all $i=1,...,m$ and $k=0,1,\ldots$.
Let us  denote   
$u= \max_{1\le i \le m}\{\deg Q_i(z)\}$ and define
\begin{equation}\label{r}
\hat{r}_{jik} = (\alpha_i\alpha^s)^kq^{uk}R_{ji}(\alpha q^{-k}).
\end{equation}
By using \eqref{rec2} and Lemma \ref{iteration} we obtain
\begin{equation}
\begin{split}
\hat{r}_{jik} 
             &= (\alpha_i\alpha^s)^kq^{uk}\left(A_j(\alpha q^{-k})f_i(\alpha q^{-k}) -B_{ji}(\alpha q^{-k})\right)\\
             &= A_j(\alpha q^{-k})\left( X_k(\alpha,q) f_i(\alpha)+ Y_{ik}(\alpha,q)\right) - (\alpha_i\alpha^s)^kq^{uk}B_{ji}(\alpha q^{-k})\\
             &=\hat{p}_{jk}f_i(\alpha)-\hat{q}_{jik},
\end{split}
\end{equation}
where 
\begin{equation}\label{hat_p}
\hat{p}_{jk}= A_j(\alpha q^{-k})X_k(\alpha,q)
\end{equation}
and
\begin{equation}\label{hat_q}
\hat{q}_{jik}= (\alpha_i\alpha^s)^kq^{uk}B_{ji}(\alpha q^{-k})- A_j(\alpha q^{-k})Y_{ik}(\alpha,q).
\end{equation}
Next we try to find such $D_k\in \Z_K$ that $D_k\hat{p}_{jk}$ and $D_k\hat{q}_{jik}$ are integers in $K$.

From the  recursion formulae \eqref{rec1} we get
\begin{equation}\label{D_k1}
A_j(z) 
= q^{sj(j-1)/2}z^{js}A_0(q^jz)
\end{equation}
and
\[\begin{split}
B_{ji}(z)
         = \left(\prod_{l=0}^{j-1}\alpha_i^{-1}P(q^{l}z)\right)&B_{0i}(q^jz)\\ &+ \sum_{l=1}^j \left(\prod_{n=0}^{l-2}P(q^nz)\right)\alpha_i^{-l}Q_i(q^{l-1}z)A_{j-l}(q^lz).
\end{split}
\]
Due to Lemma \ref{pade-poly}  polynomials \, $a^{N(N-1)/2}A_{0}(z)$ \, and \, $a^{N(N+1)/2}(AB^2P_0)^{N+1}B_{0i}(z)$ \, have integer coefficients and 
$\deg A_0(z), \deg B_{0i}(z) \le N$.
Let $A_1$ 
be a non-zero rational integer 
such that \,$A_1 \alpha_i^{-1}$ 
is an integer 
in $K$, and $S= \max\{s, u\}$. 
Hence, if we choose 
\begin{equation}\label{D}
D= a^{N(N+1)/2}b^{Sm(m-1)/2 + mN}(AB^2P_0)^{N+1}(A_1B)^m,
\end{equation}
then polynomials $DA_j(z)$ and $DB_{ji}(z)$ have integer coefficients in $K$ for all $j=0,\ldots, m$.
Let $A_2$ be a such non-zero rational integer  that $A_2\alpha$ is an integer in $K$.
If we choose now 
\begin{equation}\label{D_k}
\hat{D}_k= D(a^kA_2)^{mS+N},
\end{equation}
then the numbers
\[\hat{D}_kA_j(\alpha q^{-k}), \quad  \hat{D}_k B_{ji}(\alpha q^{-k})\]
  are integers in $K$ for all $j=0,\ldots,m$ and $k=0,1\ldots$.
In addition Lemma \ref{iteration} implies that
\[b^{sk(k+1)/2+uk} (AB A_2^{S})^k\left( X_k(\alpha,q), Y_{ik}(\alpha,q)\right) \in \Z_{K}^2.\]
Hence we obtain the following lemma.
\begin{lemma}\label{last_app}
If we choose
\[D_k = b^{sk(k+1)/2+uk}(ABA_2^S)^{k}\hat{D}_k\]
and define
\[
r_{jik}= D_k \hat{r}_{jik}
\]
then 
\[r_{jik}= p_{jk}f_i(\alpha)-q_{jik},\]
where $p_{jk}$ and $q_{jik}$ are integers in $K$.
\end{lemma}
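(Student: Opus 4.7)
The plan is to verify that the two numbers $p_{jk} := D_k\hat{p}_{jk}$ and $q_{jik} := D_k\hat{q}_{jik}$ both lie in $\Z_K$; the identity $r_{jik} = p_{jk}f_i(\alpha) - q_{jik}$ is then immediate, since the preceding computation has already established $\hat{r}_{jik} = \hat{p}_{jk} f_i(\alpha) - \hat{q}_{jik}$ and $r_{jik}$ is defined by multiplying through by $D_k$. So the lemma reduces to a bookkeeping check.

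For $p_{jk}$, I would use \eqref{hat_p} and the definition of $D_k$ to factorize
\[
p_{jk} = \bigl[\hat{D}_k A_j(\alpha q^{-k})\bigr]\cdot\bigl[b^{sk(k+1)/2+uk}(ABA_2^S)^k\, X_k(\alpha,q)\bigr].
\]
The first bracket belongs to $\Z_K$ by the paragraph preceding the lemma (the choice of $\hat{D}_k$ was made precisely to clear denominators in $A_j(\alpha q^{-k})$ for $j=0,\ldots,m$), and the second bracket belongs to $\Z_K$ by the final displayed inclusion
\[
b^{sk(k+1)/2+uk}(ABA_2^{S})^k\bigl(X_k(\alpha,q),Y_{ik}(\alpha,q)\bigr)\in\Z_K^2
\]
from the same paragraph.

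For $q_{jik}$, I would use \eqref{hat_q} to split it into two summands. The summand $D_k A_j(\alpha q^{-k}) Y_{ik}(\alpha,q)$ factors exactly as $p_{jk}$ did, with $X_k$ replaced by $Y_{ik}$, and is therefore in $\Z_K$ by the same argument. For the summand $D_k(\alpha_i\alpha^s)^k q^{uk} B_{ji}(\alpha q^{-k})$, I would rewrite $q^{uk}=a^{uk}/b^{uk}$ so that $b^{uk}$ is absorbed into the $b^{sk(k+1)/2+uk}$ factor of $D_k$, leaving only positive integer powers of $a$. The remaining transcendental data $(\alpha_i\alpha^s)^k$ together with the denominator from $\alpha_i^{-j}$ inside $B_{ji}$ is handled by the rational-integer factors $A_1^m$ and $A_2^{Sk}$ bundled into $\hat{D}_k$ via \eqref{D}--\eqref{D_k}, using that $A_1\alpha_i^{-1}\in\Z_K$ (covering the $m$ copies of $\alpha_i^{-1}$ produced by the iteration \eqref{rec1}) and $A_2\alpha\in\Z_K$ (covering the $Sk$ evaluations at $\alpha$ and $\alpha q^{-k}$). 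Combined with the integrality $\hat{D}_k B_{ji}(\alpha q^{-k})\in\Z_K$ already established, this yields the claim.

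The principal obstacle is not conceptual but notational: keeping consistent track of the seven denominator-clearing constants $a,b,P_0,A,B,A_1,A_2$ and the exponents with which they enter $D_k$, and verifying that each of them appears to a high enough power to simultaneously clear the denominators coming from the coefficients of $A_j,B_{ji}$, from the evaluation $z=\alpha q^{-k}$, and from the polynomials $X_k,Y_{ik}$. Once the four integrality facts assembled immediately before the lemma statement are accepted, the verification is just two products-of-$\Z_K$-integers computations.
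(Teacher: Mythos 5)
Your proposal is correct and follows exactly the paper's (implicit) derivation: the lemma is stated after the integrality facts are assembled and the paper offers no further proof, because multiplying $\hat{p}_{jk}$, $\hat{q}_{jik}$ by $D_k$ and factoring $D_k = \bigl[b^{sk(k+1)/2+uk}(ABA_2^S)^k\bigr]\cdot\hat{D}_k$ against $\hat{p}_{jk}=A_j(\alpha q^{-k})X_k$ and $\hat{q}_{jik}=(\alpha_i\alpha^s)^kq^{uk}B_{ji}(\alpha q^{-k})-A_j(\alpha q^{-k})Y_{ik}$ reduces everything to products of $\Z_K$-elements.

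One bookkeeping slip in your treatment of the $(\alpha_i\alpha^s)^kq^{uk}B_{ji}$ summand: the denominator of $\alpha_i^k$ is cleared by $A^k$ (the natural number $A$ chosen so that $A\alpha_i\in\Z_K$), not by $A_1^m$ or $A_2^{Sk}$ — $A_1$ clears $\alpha_i^{-1}$, which is already absorbed into $\hat{D}_k B_{ji}\in\Z_K$, and $A_2$ clears $\alpha$, not $\alpha_i$. Also $A_2^{Sk}$ is not bundled into $\hat{D}_k$: the factor $(ABA_2^S)^k$ sits outside $\hat{D}_k$ in the definition of $D_k$, while $\hat{D}_k=D(a^kA_2)^{mS+N}$ carries $A_2^{mS+N}$. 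Finally, the cited ``four integrality facts'' alone do not cover the $(\alpha_i\alpha^s)^kq^{uk}B_{ji}$ term — one still needs the short extra check that
\[
b^{sk(k+1)/2+uk}(ABA_2^S)^k(\alpha_i\alpha^s)^kq^{uk}
= b^{sk(k+1)/2}B^k a^{uk}(A\alpha_i)^k A_2^{(S-s)k}(A_2\alpha)^{sk}\in\Z_K,
\]
using $S\ge s$. None of this changes the conclusion, since every needed constant does occur in $D_k$; it is only the attribution that is off.
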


\vspace{0.5 em}

\begin{lemma} \label{determ_cond}
Assume that $k$ satisfies the conditions \eqref{k1} and \eqref{k2}.
We have 
\begin{equation}\label{determinantcond}
\Delta_k := \det\left(\begin{matrix} p_{0k} & q_{01k} & \cdots & q_{0mk}\\ 
                           p_{1k} & q_{11k} & \cdots & q_{1mk} \\
                           \cdots \\
                           p_{mk} & q_{m1k} & \cdots & q_{mmk}\end{matrix}\right) 
                           \neq 0.
\end{equation}                           
\end{lemma}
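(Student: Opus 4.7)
The plan is to reduce $\Delta_k$ to the determinant $\Delta(\alpha q^{-k})$, whose non-vanishing at the chosen $k$ is guaranteed by \eqref{k2}. First I would pull out the common factor $D_k$ from every row, writing $\Delta_k = D_k^{m+1}\hat{\Delta}_k$, where $\hat{\Delta}_k$ is the analogous determinant built from the $\hat{p}_{jk}$ and $\hat{q}_{jik}$. Since $D_k\ne 0$ by its construction in \eqref{D} and \eqref{D_k}, it suffices to show $\hat{\Delta}_k\ne 0$.

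The key step is a column operation exploiting the identity $\hat{q}_{jik} = \hat{p}_{jk} f_i(\alpha) - \hat{r}_{jik}$ established just before Lemma \ref{last_app}. For each $i=1,\ldots, m$, subtracting $f_i(\alpha)$ times the first column from the $(i+1)$-st column replaces $\hat{q}_{jik}$ by $-\hat{r}_{jik}$, so
\[
\hat{\Delta}_k = (-1)^m\det\bigl(\hat{p}_{jk},\; \hat{r}_{j1k},\,\ldots,\,\hat{r}_{jmk}\bigr)_{j=0,\ldots,m}.
\]
Now I would invoke the explicit expressions \eqref{hat_p} and \eqref{r}: the first column factors as $\hat{p}_{jk}=X_k(\alpha,q)\,A_j(\alpha q^{-k})$, and the $(i+1)$-st column factors as $\hat{r}_{jik}=(\alpha_i\alpha^s)^k q^{uk} R_{ji}(\alpha q^{-k})$. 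Pulling these scalars out of the respective columns gives
\[
\hat{\Delta}_k = (-1)^m X_k(\alpha,q)\prod_{i=1}^m (\alpha_i\alpha^s)^k q^{uk}\;\det\bigl(A_j(\alpha q^{-k}),\,R_{j1}(\alpha q^{-k}),\,\ldots,\,R_{jm}(\alpha q^{-k})\bigr).
\]
By the second expression for $\Delta(z)$ in \eqref{determ}, the remaining determinant equals $(-1)^m\Delta(\alpha q^{-k})$, so $\hat{\Delta}_k = X_k(\alpha,q)\prod_{i=1}^m(\alpha_i\alpha^s)^k q^{uk}\,\Delta(\alpha q^{-k})$.

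It remains to verify that each factor on the right is non-zero. The products $\prod_{i=1}^m(\alpha_i\alpha^s)^k q^{uk}$ are non-zero because $\alpha,\alpha_i,q$ are all non-zero. Since $\alpha$ was chosen so that $P(\alpha q^{-k})\ne 0$ for every $k\ge 1$, the formula $X_k(\alpha,q)=q^{sk(k+1)/2+uk}\prod_{j=1}^k P(\alpha q^{-j})$ from Lemma \ref{iteration} gives $X_k(\alpha,q)\ne 0$. Finally, $\Delta(\alpha q^{-k})\ne 0$ by the choice of $k$ in \eqref{k2}. The main (essentially the only) obstacle is bookkeeping the signs and factorizations correctly; the argument is otherwise a direct linear-algebraic manipulation combining Lemmas \ref{iteration}, \ref{pade-poly}, and the previous non-vanishing lemma for $\Delta(z)$.
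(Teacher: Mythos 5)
Your proposal is correct and follows the same path as the paper: factor out $D_k$ from each row, use the identity $\hat{q}_{jik}=\hat{p}_{jk}f_i(\alpha)-\hat{r}_{jik}$ via column operations, pull out $X_k(\alpha,q)$ and $(\alpha_i\alpha^s)^kq^{uk}$, and identify the remaining determinant with $(-1)^m\Delta(\alpha q^{-k})$, obtaining $\Delta_k = D_k^{m+1}X_k(\alpha,q)(q^u\alpha^s)^{mk}\bigl(\prod_{i=1}^m\alpha_i^k\bigr)\Delta(\alpha q^{-k})$. You are slightly more explicit than the paper in noting that $D_k\ne 0$ and that $X_k(\alpha,q)\ne 0$ because $P(\alpha q^{-j})\ne 0$, which the paper takes for granted, but the argument is the same.
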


\begin{proof}[Proof]
Due to Lemma \ref{last_app} we have  $r_{jik}= p_{jk}f_i(\alpha) - q_{jik}$, where  $r_{jik}= D_k \hat{r}_{jik}$ and $p_{jk}= D_k\hat{p}_{jk}$. Hence we get
\[\begin{split}
\Delta_k &= 
                           (-1)^m D_k^{m+1}
         \det\left(\begin{matrix} \hat{p}_{0k} & \hat{r}_{01k} & \cdots & \hat{r}_{0mk}\\ 
                           \hat{p}_{1k} & \hat{r}_{11k} & \cdots & \hat{r}_{1mk} \\
                           \cdots \\
                           \hat{p}_{mk} & \hat{r}_{m1k} & \cdots & \hat{r}_{mmk} \end{matrix}\right)                         
                           .
\end{split}
\]
Further by \eqref{r} and \eqref{hat_p} we 
obtain that
\[\begin{split}
\Delta_k
= (-1)^m D_k^{m+1} & X_k(\alpha,q)\prod_{i=1}^m(\alpha_i\alpha^sq^u)^k
\\ & \cdot \det\left(\begin{matrix}  A_0(\alpha q^{-k}) & R_{01}(\alpha q^{-k}) & \cdots & R_{0m}(\alpha q^{-k})\\ 
                           A_1(\alpha q^{-k})  & R_{11}(\alpha q^{-k}) & \cdots & R_{1m}(\alpha q^{-k}) \\
                           \cdots \\
                          A_m(\alpha q^{-k})  & R_{m1}(\alpha q^{-k}) & \cdots & R_{mm}(\alpha q^{-k}) \end{matrix}\right).
\end{split}\]
Due to \eqref{determ}                            
\[ \Delta_k 
= D_k^{m+1} X_k(\alpha, q) (q^{u}\alpha^{s})^{mk}\Delta(\alpha q^{-k})\prod_{i=1}^m \alpha_i^k,                                               
\]
which is non-zero.
\end{proof}

\vspace{1 em}

Finally we approximate the values of $|p_{jk}|$ and $|r_{jik}|$.
By the equality \eqref{D_k1} and Lemma \ref{pade-poly}  we get an upper bound
\begin{equation}\label{bound_A}
\begin{split}
|A_j(\alpha q^{-k})|&\leq |q|^{sm(m-1)/2}\max\{1, |\alpha|\}^{mS}|A_0(q^{j-k}\alpha)| 
\\			&\leq |q|^{sm(m-1)/2 + mN}\max\{1, |\alpha|\}^{mS+ N} (N+1) C_7^N |a|^{\gamma_1 N^2}\\
                   &\leq C_{10}^N|a|^{\gamma_1 N^2}
\end{split}
\end{equation}
for all $k=0,1,\ldots$ and $j=0,\ldots,m$.
Furthermore by the equation \eqref{rec3} and Lemma \ref{pade-poly} we get an upper bound for  reminder terms 
\begin{equation}\label{bound_rem}
\begin{split}
|R_{ji}(\alpha q^{-k})| &\leq |\alpha_i^{-j}P(\alpha q^{-k})P(\alpha q^{-k+1})\cdots P(\alpha q^{-k+j-1})||R_{0i}(\alpha q^{j-k})|\\
                        &\leq |\alpha_i^{-j}P(\alpha q^{-k})P(\alpha q^{-k+1})\cdots P(\alpha q^{-k+j-1})|\, C_{9}^N |a|^{\gamma_1 N^2}|\alpha q^{j-k}|^{N_i}\\
                        &\leq C_{11}^N |a|^{\gamma_1 N^2}|q|^{-k N_i} 
\end{split}
\end{equation}
for all $k=0,1,\ldots$, $i=1,\ldots,m$ and $j=0,\ldots, m$, if $2C_8|\alpha||q|^m< |q|^k$.

Let us define 
\begin{equation}
\gamma := \frac{\log|b|}{\log|a|}.
\end{equation}
Because $|q|= \frac{|a|}{|b|}>1$ and $a,b\in \Z_K$, we have $0\le \gamma <1$.

\begin{lemma}\label{ubounds} Assume that $k$ satisfies the conditions \eqref{k1} and \eqref{k2} and $\gamma$ satisfies the condition
\begin{equation}\label{gamma_cond}
\gamma < \frac{\rho-m\delta}{\rho}.
\end{equation}
Then we have
\begin{equation}
  |p_{jk}|\leq C_{12}^{N}|a|^{\gamma_2 N^2}
\end{equation}
and if $N\ge C_{14}$
\begin{equation} \label{bounds_again}
  |r_{jik}| \leq C_{13}^N |a|^{\gamma_4N^2 - \gamma_5Nn_i},
\end{equation}
where $\gamma_2, \gamma_4$ and $\gamma_5$ are positive constants defined in \eqref{gamma2}, \eqref{gamma4} and
\eqref{gamma5}.
\end{lemma}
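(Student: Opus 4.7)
The plan is to prove both estimates by direct substitution. The quantities $p_{jk}$ and $r_{jik}$ have just been written as
\[p_{jk}=D_k\,A_j(\alpha q^{-k})\,X_k(\alpha,q), \qquad r_{jik}=D_k(\alpha_i\alpha^s)^k q^{uk}R_{ji}(\alpha q^{-k}),\]
with $A_j$, $X_k$, and $R_{ji}$ already estimated in \eqref{bound_A}, \eqref{bound_iterx}, \eqref{bound_rem}, and $D_k$ made completely explicit via \eqref{D}, \eqref{D_k}. The only conceptual device is the book-keeping identity $|q|=|a|/|b|$ together with $|b|=|a|^{\gamma}$ (valid since $|a|>|b|\geq 1$), which lets me collapse every resulting product into a constant-base $C^{N+k}$ times a single power of $|a|$.

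For $|p_{jk}|$ I would multiply the three ingredients. The key cancellation is that the factor $|b|^{sk(k+1)/2}$ supplied by $D_k$ exactly kills the $|b|^{-sk(k+1)/2}$ hidden inside $|q|^{sk(k+1)/2}$ of \eqref{bound_iterx}; the surviving $|b|$-powers are all linear in $k$ and $N$ and convert via $|b|=|a|^{\gamma}$ to a sub-$N^2$ contribution absorbed into $C_{12}^N$. Applying $k\leq \rho N$ from \eqref{k1} to the quadratic term $sk(k+1)/2$ and to the cross-term $kN$ then produces the announced $N^2$ coefficient $\gamma_2$.

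For $|r_{jik}|$ the procedure is identical, with two twists. First, the validity hypothesis $2C_8|\alpha||q|^m<|q|^k$ of \eqref{bound_rem} amounts to a lower bound on $k$ supplied automatically by $k>(\rho-m\delta)N-O(1)$ from \eqref{k1}, provided $N\geq C_{14}$. Second, the factor $|q|^{-kN_i}$ is what creates the fundamental negative contribution $-(1-\gamma)kN_i$ to the $|a|$-exponent. Splitting $N_i=N+n_i-[\delta N]$ separates the terms: the negative piece $-(1-\gamma)k n_i$ is upper-bounded using the lower bound $k\geq(\rho-m\delta)N-O(1)$, producing exactly $-\gamma_5 N n_i$ with $\gamma_5=(1-\gamma)(\rho-m\delta)$, while every positive contribution (from $|D_k|$, from $|q|^{uk}$, from the $\gamma sk^2/2$ piece generated by $|b|^{sk(k+1)/2}$, and from the remaining $kN$ pieces) is handled with $k\leq \rho N$, producing the $N^2$ coefficient $\gamma_4$.

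The main obstacle is purely clerical: a dozen factors of $|a|$, $|b|$, $|q|$ and of fixed constants must be tracked correctly, and the opposite bounds on $k$ from \eqref{k1} must be routed to the positive and to the negative parts of the exponent without double-counting (when a term like $kN - (1-\gamma)kN = \gamma k N$ is present, combining before estimating is tighter than estimating before combining, and this choice is what pins down the explicit value of $\gamma_4$). The hypothesis \eqref{gamma_cond}, $\gamma<(\rho-m\delta)/\rho$, is precisely what forces $\rho-m\delta>\gamma \rho$, hence $\gamma_5>0$, so that \eqref{bounds_again} really does decrease in $n_i$; this positivity will be indispensable in the next step when the estimates are combined with the non-vanishing of $\Delta_k$ from Lemma \ref{determ_cond} to extract a linear-independence measure.
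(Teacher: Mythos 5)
Your outline reproduces the paper's proof strategy faithfully in the large: multiply the explicit bounds for $|D_k|$, $|A_j(\alpha q^{-k})|$, $|X_k(\alpha,q)|$ and $|R_{ji}(\alpha q^{-k})|$; observe the cancellation $|b|^{sk(k+1)/2}|q|^{sk(k+1)/2}=|a|^{sk(k+1)/2}$ in the $p_{jk}$ estimate; and then route the two-sided bound \eqref{k1} on $k$ so that positive exponent contributions are bounded with $k\le\rho N$ while the $n_i$-dependent negative contribution is bounded with $k>(\rho-m\delta)N-O(1)$. Your remark that the validity condition $2C_8|\alpha||q|^m<|q|^k$ is automatic for $N\ge C_{14}$ is also correct. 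So the skeleton is sound.

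The issue is in the precise routing. You advertise $\gamma_5=(1-\gamma)(\rho-m\delta)$, obtained by first converting $|q|^{-kN_i}=|a|^{-(1-\gamma)kN_i}$ and only then separating $N_i=N+n_i-[\delta N]$ and inserting the two-sided bound on $k$. The paper's \eqref{gamma5} is $\gamma_5=\rho-m\delta-\gamma\rho$, which is smaller by $\gamma m\delta$. It comes from a different split: the paper keeps the $|a|$ and $|b|$ exponents apart, bounds the \emph{combined} $|a|$-term $-k(n_i-[\delta N])$ in one piece via the lower bound on $k$ (giving $-(\rho-m\delta)Nn_i+\delta(\rho-m\delta)N^2$), bounds $kN_i$ inside the $|b|$-exponent in one piece via the upper bound on $k$ (giving $\rho Nn_i+(1-\delta)\rho N^2$), and only then applies $|b|=|a|^\gamma$. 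Carrying your routing to the end, your $\gamma_4$ is $m\delta^2$ larger than \eqref{gamma4}, while your $\gamma_5$ is $\gamma m\delta$ larger than \eqref{gamma5}. Since $\gamma_5-m\gamma_4$ then changes by $m\delta(\gamma-m\delta)$, neither routing uniformly dominates; your claim that ``combining before estimating is tighter'' is tighter in the $Nn_i$ coefficient but looser in the $N^2$ coefficient because $k[\delta N]$ gets the upper bound $\rho N$ rather than the combined lower bound $(\rho-m\delta)N$. As the lemma asserts the bound with the \emph{specific} constants of \eqref{gamma4}--\eqref{gamma5}, to prove it as stated you should follow the paper's split; otherwise you are proving a variant whose downstream consequence $\gamma_2/(\gamma_5-m\gamma_4)$ differs. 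There is also an internal inconsistency at the end of your writeup: you justify \eqref{gamma_cond} as ``forcing $\rho-m\delta>\gamma\rho$, hence $\gamma_5>0$,'' but that is the positivity of the paper's $\gamma_5$, not of your $(1-\gamma)(\rho-m\delta)$, which is positive whenever $\gamma<1$ and $\rho>m\delta$ with no appeal to \eqref{gamma_cond} at all.
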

\begin{proof}[Proof]
Due to Lemma \ref{last_app} we have  $r_{jik}= p_{jk}f_i(\alpha) - q_{jik}$, where  $r_{jik}= D_k \hat{r}_{jik}$, $p_{jk}= D_k\hat{p}_{jk}$ and 
\[
 |D_k| \le C_ {15}^k |b|^{sk(k+1)/2}|\hat{D}_k|. 
\]
Further by using \eqref{D} and \eqref{D_k} we get
\[
 |D_k| \le C_{16}^{k+N} |a|^{N^2/2 + kN}|b|^{sk(k+1)/2}.
\]
Due to the equation \eqref{hat_p} we have $\hat{p}_{jk} = A_j(\alpha q^{-k})X_k(\alpha, q)$, hence by using \eqref{bound_iterx}, \eqref{bound_A} and \eqref{k1}
we get
\[\begin{split}
  |p_{jk}| &\leq C_{16}^{k+N}|a|^{N^2/2 + kN}|b|^{sk(k+1)/2}C_2^k|q|^{sk(k+1)/2}\max\{1, |\alpha|\}^{C_3 k}C_{10}^N|a|^{\gamma_1 N^2}\\
        &\leq C_{12}^{N}|a|^{(1/2+\rho+\gamma_1+ s\rho^2/2)N^2 }\\
        &=C_{12}^{N}|a|^{\gamma_2 N^2},
\end{split}\]
where 
\begin{equation}\label{gamma2}
\gamma_2 = \gamma_2(\delta, \rho) = \gamma_1(\delta) + \rho + \frac{1}{2}(s\rho^2 + 1).
\end{equation}

We have defined in \eqref{r} that $\hat{r}_{ijk} = (\alpha_i\alpha^s)^{k} q^{uk} R_{ji}(\alpha q^{-k})$. Hence by using \eqref{bound_rem} and \eqref{k1} we get
\[\begin{split}
|r_{jik}| &\leq C_{17}^{k+N}|a|^{N^2/2+kN}|b|^{sk(k+1)/2} C_{11}^N|a|^{\gamma_1 N^2} |q|^{-kN_i} \\
      &\le C_{18}^{k+N} |a|^{(1/2 + \gamma_1)N^2  - k(N_i-N)}|b|^{sk^2/2+ kN_i}\\
      &\leq C_{13}^{N}|a|^{( 1/2 + \gamma_1 + \delta(\rho-m\delta))N^2 -(\rho-m\delta)Nn_i}|b|^{(s\rho/2 + 1-\delta)\rho N^2+\rho Nn_i}\\
     &= C_{13}^{N}|a|^{\gamma_3 N^2-(\rho-m\delta)Nn_i}|b|^{(s\rho/2 +1-\delta)\rho N^2+ \rho N n_i} 
\end{split}\]
if $
N> C_{14}$,
where 
\begin{equation}\label{gamma3}
\gamma_3= \gamma_3(\delta, \rho)= \gamma_1(\delta)+ \frac{1}{2}+\delta(\rho-m\delta).
\end{equation}
Now $|b| = |a|^\gamma$ and we can write the upper bound in the form
\[
|r_{jik}| 
         \le C_{13}^N |a|^{\gamma_4 N^2 - \gamma_5 Nn_i},
\]
where
\begin{align}
\gamma_4 &= \gamma_4(\delta, \rho)= \gamma_3(\delta,\rho) + \gamma\rho(s\rho/2 + 1 - \delta)\label{gamma4},\\
\gamma_5 &= \gamma_5(\delta, \rho)= \rho-m\delta - \gamma \rho. \label{gamma5} 
\end{align}

Constants $\gamma_2$, $\gamma_3$ and $\gamma_4$ are clearly positive, $\gamma_5$ is positive because of \eqref{gamma_cond}.

\end{proof}
In order to get the linear independence measure for numbers $1, f_1(\alpha), \ldots, f_m(\alpha)$, it is essential to have a condition
\begin{equation}\label{exp_cond}
\gamma_5-m\gamma_4 > 0
\end{equation}
The condition \eqref{exp_cond} will be satisfied if we set
\begin{equation}\label{cond-rho}
\rho > \frac{m(\gamma_1+ \frac{1}{2})}{1-m\delta}+m\delta
\end{equation}
and
\begin{equation}\label{gamma_cond2}
\gamma < \frac{\rho - m\delta -m\gamma_3}{\rho+m\rho(\frac{s\rho}{2} +1 - \delta)}.
\end{equation}
Namely, the inequality \eqref{cond-rho} implies that
\[\rho - m\delta > m(\gamma_1 + \frac{1}{2}+ \delta(\rho-m\delta))=m\gamma_3\]
and further
\[\rho - m\delta -m\gamma_3 >0.\]
Hence the condition \eqref{gamma_cond2} seems to be relevantly stated and it implies the  condition \eqref{exp_cond} immediately.


\section{A Linear Independence Measure}
We use the notation
\[L:= l_0 + l_1f_1(\alpha)+ \ldots + l_mf_m(\alpha), \quad (l_0, l_1, \ldots, l_m)\in\Z_K^{m+1}\setminus\{\bar{0}\} \]
for the linear form to be estimated
and denote the linear forms introduced in the previous section in Lemma \ref{last_app} shortly
\[r_{i} = p \,f_i(\alpha) - q_{i}.\]
Now  we get
\begin{equation}\begin{split} \label{estimating}
 p L &= p\, l_0 + l_1 p\, f_1(\alpha) + \ldots + l_m p\, f_m(\alpha) \\
           &= p\,l_0 + l_1 (r_{1}+ q_{1}) + \ldots + l_m (r_{m}+q_{m})\\
           &= G + R,
\end{split}\end{equation}
where
\begin{equation}\label{GG}
G= p\,l_0  + l_1q_{1}  + \ldots + l_m q_{m}
\end{equation}
and
\begin{equation}\label{RR}
R= l_1 r_{1} + \ldots + l_m r_{m}.
\end{equation}

By Lemma \ref{last_app} we know that $G\in \Z_K$. If $G\ne 0$, we get
\[1 \le |G| = |pL - R| \le |p||L| + |R|.\]
Next we shall show that the parameters $j$, $k$ and $\bar{n}=(n_1,\ldots, n_m)$ can be chosen so that $G \ne 0$ and
\begin{equation}\label{puolikas}
|R| < \frac{1}{2}.
\end{equation}
Then we obtain a linear independence measure
\begin{equation}\label{RResult} 
|L| > (2|p|)^{-1}. 
\end{equation}



Suppose that $k$ satisfies the conditions \eqref{k1} and \eqref{k2} and $\gamma_5 - m\gamma_4>0$. Take an arbitrary number $\varepsilon >0$ satisfying
\[\varepsilon < \frac{\gamma_5 - m\gamma_4}{2m},\]
so that we have $\gamma_5 - m(\gamma_4 + 2 \varepsilon) > 0$.
Define
\[S_0 = \max\{\varepsilon^{-1}\log_{|a|}C_{12}, \,\varepsilon^{-1}\log_{|a|}C_{13},\, \varepsilon^{-1}\log_{|a|}2m, C_{14},\, \frac{\gamma_5}{2\varepsilon}, \, \varepsilon^{-1}(m+1)\gamma_5 +1\}.\]
Then according the Lemma \ref{ubounds} and the choice of $S_0$, for every $N \geq S_0$, we have
\begin{align}
|p_{jk}|\le C_{12}^N|a|^{\gamma_2N^2} &\le |a|^{(\varepsilon+\gamma_2) N^2},\label{flim_p} \\
|r_{jik}|\le C_{13}^N|a|^{\gamma_4N^2-\gamma_5Nn_i} &\le |a|^{(\gamma_4 +\varepsilon) N^2 - \gamma_5Nn_i},\label{flim_r} \\
2m & \le |a|^{\varepsilon N},\label{flim_2m}\\
2\varepsilon N &\geq \gamma_5,\label{apulim_gamma5}\\
(m+1)\gamma_5&\leq\varepsilon(N-1).\label{apulim_(m+1)gamma5}
\end{align}

Let us estimate a linear form
\begin{equation}\label{lform}
L = l_0 + l_1f_1(\alpha)+ \ldots + l_mf_m(\alpha)
\end{equation}
with integer coefficients $l_i\in K$ satisfying the condition 
\[H=H_1H_2\cdots H_m > H_0,\] where
$H_i=\max\{1, |l_i|\}$ and
$H_0= |a|^{(S_0+m)^2(\gamma_5 -m(\gamma_4 + 2\varepsilon))}$. 
We obtained earlier that $pL = G+ R$, where $G$ and $R$ are defined in \eqref{GG} and \eqref{RR}. We prove first the condition \eqref{puolikas} for $R$. 

\vspace{1 em}

\begin{lemma}\label{ess}
Positive integers $n_1, \ldots, n_m$ and $N= n_1+ \ldots + n_m$ can be chosen so that
inequalities
\[H_ir_{i} < \frac{1}{2m} \]
hold for every $i=1,\ldots, m$.
\end{lemma}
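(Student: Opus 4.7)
The plan is a bookkeeping argument that reduces the assertion to a single inequality on $N$. First, combining the bounds \eqref{flim_r} and \eqref{flim_2m}, we have
\[
2m\,H_i|r_i| \le |a|^{h_i + \varepsilon N + (\gamma_4+\varepsilon)N^2 - \gamma_5 Nn_i},
\]
where $h_i:=\log_{|a|}H_i$. The target $H_i|r_i|<1/(2m)$ is therefore implied by
\[
n_i > \alpha_i := \frac{h_i}{\gamma_5 N} + \frac{\varepsilon}{\gamma_5} + \frac{(\gamma_4+\varepsilon)N}{\gamma_5}, \qquad i=1,\ldots,m.
\]
Summing and imposing $\sum n_i = N$ gives the compatibility condition
\[
\bigl(\gamma_5 - m(\gamma_4+\varepsilon)\bigr)N^2 - m\varepsilon N > h, \qquad h:=\log_{|a|}H.
\]

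Second, the assumption $\gamma_5 - m(\gamma_4+2\varepsilon)>0$ built into the choice of $\varepsilon$ makes the leading coefficient of this quadratic in $N$ bounded below by $m\varepsilon$, so the quadratic eventually dominates. For every $H > H_0$ I would take $N$ to be the smallest integer with $N\ge S_0$ satisfying the stronger inequality $(\gamma_5 - m(\gamma_4+2\varepsilon))N^2 \ge h$; this is possible by the very definition of $H_0$, and gives $N$ of order $\sqrt{h}$. Then I set $n_i := \lceil\alpha_i\rceil$. Each such $n_i$ strictly exceeds $\alpha_i$, so the first paragraph yields $H_i|r_i|<1/(2m)$ immediately. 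Moreover $\sum \lceil\alpha_i\rceil \le \sum\alpha_i + m \le N$ by the slack in the stronger compatibility inequality above; if the sum is strictly less than $N$, the deficit is added to (say) $n_1$, which only strengthens the inequality for that index.

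The side condition $n_i\ge \delta N$ (needed to invoke Lemma \ref{pade-poly}) comes nearly for free: in $\alpha_i$ the dominant term as $N\to\infty$ is $(\gamma_4+\varepsilon)N/\gamma_5$, and combining \eqref{gamma3}--\eqref{gamma5} with \eqref{cond-rho} one checks $(\gamma_4+\varepsilon)/\gamma_5 > \delta$. The main obstacle is not any deep inequality but the rounding bookkeeping: one must choose $N$ with just enough room to absorb the $m$ integer units lost when rounding $\alpha_i$ up, while still keeping $N$ of order $\sqrt{h}$, since $N$ ultimately controls the exponent $\mu+\varepsilon$ in Theorem \ref{thm1}. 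The factor $2$ appearing in $\gamma_5-m(\gamma_4+2\varepsilon)>0$ is precisely the buffer supplying the $m$ rounding units while still leaving positive slack in the quadratic.
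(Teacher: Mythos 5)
Your argument is sound and follows essentially the same strategy as the paper: linearize the target inequalities, solve the resulting system to locate the correct scale $N\sim\sqrt{\log_{|a|}H/(\gamma_5-m(\gamma_4+2\varepsilon))}$, round to integers, and verify that the buffer $\varepsilon$ absorbs both the constants $C_{12},C_{13},2m$ and the $O(m)$ units lost to rounding. The only substantive difference is bookkeeping: the paper first solves for real numbers $s_i$ satisfying $\gamma_5 s_iS-(\gamma_4+2\varepsilon)S^2=\log_{|a|}H_i$, sets $n_i=\lfloor s_i\rfloor$ and lets $N=\sum n_i$ fall out (so $S-m<N\le S$), whereas you fix $N=\lceil S\rceil$ first, compute the lower bound $\alpha_i$ for each $n_i$, round up, and pad the deficit onto $n_1$. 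Both routes produce the same asymptotics and both verifications of $n_i\ge\delta N$ go through the same inequality $\gamma_4/\gamma_5>\delta$, so this is a variant of the paper's proof rather than a different idea.

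One small gap: you assert that $n_i=\lceil\alpha_i\rceil$ ``strictly exceeds $\alpha_i$,'' which fails when $\alpha_i\in\Z$, and strictness is what turns $2mH_i|r_i|\le|a|^{\,\cdot\,}$ into $2mH_i|r_i|<1$. Replace the ceiling by $\lfloor\alpha_i\rfloor+1$ (which always strictly exceeds $\alpha_i$) to close this; the slack accounting $\sum(\lfloor\alpha_i\rfloor+1)\le\sum\alpha_i+m\le N$ that you already carried out is unaffected. With that fix the proof is complete.
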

\begin{proof}[Proof]
By \eqref{flim_r} and \eqref{flim_2m} we know  that for all $N\ge S_0$ we have
\[|r_{i}|\leq |a|^{(\gamma_4 + \varepsilon)N^2 - \gamma_5 n_i N}\]
and $2m \le  |a|^{\varepsilon N}$.
Hence we need to show that the inequalities 
\[|a|^{-(\gamma_5 n_i N - (\gamma_4+ 2\varepsilon)N^2)} < H_i^{-1},\]
or 
\[\gamma_5 n_i N - (\gamma_4 + 2\varepsilon)N^2 > \log_{|a|}H_i,
\]
hold. 
%
%
First we solve the equation
\begin{equation}\label{log-cond}
 \gamma_5 s_i S - (\gamma_4 + 2\varepsilon)S^2 = \log_{|a|}H_i, 
\end{equation}
where $S= \sum_{i=1}^m s_i$.
By summing the equation when $i=1,\ldots,m$, we get
\[(\gamma_5 - m(\gamma_4 + 2\varepsilon))S^2= \log_{|a|}H\]
and 
\[S = \sqrt{\frac{\log_{|a|}H}{\gamma_5 - m(\gamma_4+ 2\varepsilon)}}> S_0 + m\]
(thanks to the definition of $H_0$).
Further from \eqref{log-cond} we get
\[s_i = \frac{\log_{|a|}H_i+ S^2(\gamma_4 + 2\varepsilon)}{S\gamma_5}.\]

Put now
\[n_i = \lfloor s_i \rfloor, \quad i=1,\ldots,m.\]
Since
\[\sum_{i=1}^m s_i = S,\]
we deduce that
\[S_0 < S-m < N=n_1+\ldots + n_m \leq S.\]
In addition 
\[\begin{split}
n_i &> s_i -1 = \frac{\log_{|a|}H_i+ S^2(\gamma_4 + 2\varepsilon)}{S\gamma_5} -1\\
		&\ge \frac{S(\gamma_4+ 2\varepsilon) -\gamma_5 }{\gamma_5}.
		\end{split}\]
By \eqref{apulim_gamma5} and the choice of $N$ we get now		
		\[\begin{split}
   n_i & \ge \frac{\gamma_4}{\gamma_5}N = \frac{\gamma_3 + \gamma\rho(s\rho/2 + 1 - \delta)}{\rho-m\delta-\gamma\rho}N\\
    &\ge \frac{\gamma_3}{\rho-m\delta}N = \frac{ \gamma_1+ \frac{1}{2}+ \delta(\rho-m\delta)}{\rho-m\delta}N> \delta N  
\end{split}\]
as required. Because $N > S-m >S_0$, we get
from \eqref{flim_r} that 
\[|r_{i}|\le |a|^{(\gamma_4 + \varepsilon)N^2 - \gamma_5 n_i N} < |a|^{(\gamma_4 + \varepsilon)S^2 -\gamma_5 n_i (S-m)} \]
and from \eqref{log-cond} that
\[H_i= |a|^{\gamma_5 s_i S - (\gamma_4 + 2\varepsilon)S^2} < |a|^{\gamma_5 (n_i +1)S - (\gamma_4 + 2 \varepsilon)S^2}.\]
Hence by combining these approximations we get
\[\begin{split}H_i|r_{i}| &< |a|^{-\varepsilon S^2 +\gamma_5 S + \gamma_5 m n_i} < |a|^{-\varepsilon S^2 + \gamma_5S(1+m)}\\
                              &= |a|^{S(-\varepsilon S +\gamma_5 (1+m))}.
\end{split}\] 
Now by using \eqref{flim_2m} and \eqref{apulim_(m+1)gamma5} we get the wanted result
\[H_i|r_{i}| <  |a|^{-\varepsilon S} \leq \frac{1}{2m}.\]
%


\end{proof}

By \eqref{RR} and Lemma \ref{ess} we obtain now
\[|R|\leq \sum_{i=1}^m |l_i||r_i|\leq \sum_{i=1}^m H_i|r_i| < \frac{1}{2}.\]
By Lemma \ref{determ_cond} for any given linear form $L$ there exists an index $j\in\{0,1,\ldots, m\}$ such that
\[G_{jk}= l_0p_{jk} + l_1q_{j1k}+ \ldots + l_mq_{jmk}\neq 0.\]
Since by Lemma \ref{last_app} $G= G_{jk}$ is an integer in $K$, we have 
\begin{equation}
|G|\geq 1.
\end{equation}
%
Hence we have the linear independence measure \eqref{RResult} and by \eqref{flim_p} we get
\[|L| > (2|p|)^{-1} \geq \frac{1}{2}|a|^{-(\gamma_2+\varepsilon)N^2}.\]
%
%
Further we get
\[|L| > \frac{1}{2}|a|^{-(\gamma_2 + \varepsilon)S^2} \ge \frac{1}{2}H^{-(\gamma_2+ \epsilon)/(\gamma_5 - m(\gamma_4+ 2\varepsilon))}\]
for all $H> H_0$.

We set now $\delta_0= m\delta$ and $\rho_0 = \rho-m\delta$.
Since $\varepsilon > 0$ is arbitrary, we can state the final result in the following form.
\begin{lause}\label{thm2}
Suppose that none of the functions $f_1(z), \ldots, f_m(z)$ is a polynomial and 
$\alpha_i\neq \alpha_j q^l$ for all $i \neq j$ and $l\in \Z$. 
Let $\alpha$ be a non-zero element of $K$ satisfying $P(\alpha q^{-k})\ne 0$, $k=1,2,\ldots$. Let
\[\gamma_1 = \frac{(2-\delta_0)^2(1-\delta_0)}{2\delta_0}, \quad \gamma_2= \gamma_1 + \rho_0+ \delta_0 + \frac{1}{2}(1+s(\rho_0+ \delta_0)^2),\]
\[\gamma_3=\gamma_1+ \frac{1}{2}+ \frac{\delta_0\rho_0}{m}, \quad \gamma_4 = \gamma_3 + \gamma(\rho_0+ \delta_0)(1-\frac{\delta_0}{m} +s\frac{\rho_0+\delta_0}{2}),\] 
\[\gamma_5= \rho_0-\gamma(\rho_0 +\delta_0) \]
be positive constants, where $0< \delta_0 < 1$ and
\begin{equation}\label{bound_rho}
\rho_0 > \frac{m(\gamma_1 + \frac{1}{2})}{1-\delta_0}>0 .
\end{equation}
Let $\gamma = \frac{\log|b|}{\log|a|}$ satisfy the condition
\begin{equation}\label{bound_gamma}
\gamma < \frac{\rho_0 - m\gamma_3}{(\rho_0+\delta_0)(1+  \frac{sm(\rho_0+\delta_0)}{2} +m - \delta_0)}.
\end{equation}
Then for any $\varepsilon_0 > 0$ there exist a positive constant $C_0= C_0(\varepsilon_0)$ such that for any $(l_0, l_1, \ldots, l_m )\in\Z_K^{m+1}\setminus\{\bar{0}\}$ there holds the inequality
\[|l_0 + l_1f_1(\alpha) + \cdots + l_mf_m(\alpha)|> C_0 (H_1H_2\cdots H_m)^{-\gamma_2/(\gamma_5 -m\gamma_4)-\varepsilon_0},\]
where $H_i =\max\{1, |l_i|\}$ for $i=1,\ldots, m$.
\end{lause}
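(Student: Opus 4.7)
The plan is to assemble the pieces built up in Sections 2--4 and then re-parametrize by $\delta_0 = m\delta$ and $\rho_0 = \rho - m\delta$. Given $L = l_0 + l_1f_1(\alpha) + \cdots + l_mf_m(\alpha)$ with $(l_0,\ldots,l_m)\in \Z_K^{m+1}\setminus\{\bar 0\}$ and $H = H_1\cdots H_m > H_0$, I first pick $\varepsilon>0$ small (so that $\gamma_5 - m(\gamma_4 + 2\varepsilon) > 0$) and define $S = \sqrt{\log_{|a|}H/(\gamma_5 - m(\gamma_4 + 2\varepsilon))}$, which exceeds $S_0 + m$ by the definition of $H_0$. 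Then Lemma \ref{ess} yields positive integers $n_1,\ldots,n_m$ with $N = n_1+\cdots+n_m \in (S_0, S]$, satisfying $n_i > \delta N$, and (after choosing $k$ in the range \eqref{k1}--\eqref{k2}) with
\[|H_i\, r_{jik}| < \frac{1}{2m}, \qquad i=1,\ldots,m,\]
for every $j=0,\ldots,m$. Hence $|R| \le \sum_{i=1}^m H_i |r_{jik}| < 1/2$.

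Second, since by Lemma \ref{last_app} the numbers $p_{jk}$ and $q_{jik}$ are in $\Z_K$, the quantity $G_{jk} = l_0 p_{jk} + l_1 q_{j1k} + \cdots + l_m q_{jmk}$ lies in $\Z_K$ for each $j$. Lemma \ref{determ_cond} asserts that the matrix with rows $(p_{jk}, q_{j1k},\ldots, q_{jmk})$ for $j=0,\ldots,m$ is nonsingular, so its rows are $K$-linearly independent; as $(l_0,\ldots,l_m)\ne\bar 0$, at least one $G_{jk}$ is nonzero, and then $|G_{jk}| \ge 1$. Fix such a $j$. Combining \eqref{estimating}, \eqref{GG}, \eqref{RR} with the bound on $R$ gives
\[1 \le |G_{jk}| = |p_{jk} L - R| \le |p_{jk}|\,|L| + \tfrac12,\]
so $|L| \ge (2|p_{jk}|)^{-1}$. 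Using \eqref{flim_p} and $N \le S$ we conclude
\[|L| \ge \tfrac12 |a|^{-(\gamma_2 + \varepsilon)N^2} \ge \tfrac12 |a|^{-(\gamma_2+\varepsilon)S^2} = \tfrac12\, H^{-(\gamma_2+\varepsilon)/(\gamma_5-m(\gamma_4+2\varepsilon))}.\]

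Finally, substituting $\delta = \delta_0/m$ and $\rho = \rho_0 + \delta_0$ into the $\gamma_i$ of Lemma \ref{ubounds} transforms \eqref{gamma2}--\eqref{gamma5} into the $\gamma_i$ of the statement, and conditions \eqref{cond-rho}, \eqref{gamma_cond2} become \eqref{bound_rho}, \eqref{bound_gamma}. Since $\varepsilon>0$ was arbitrary, the exponent can be made arbitrarily close to $\gamma_2/(\gamma_5 - m\gamma_4)$ from above, which gives the desired $-\gamma_2/(\gamma_5-m\gamma_4)-\varepsilon_0$ for any $\varepsilon_0>0$; absorbing the factor $1/2$ and the cases $H \le H_0$ into a new constant $C_0=C_0(\varepsilon_0)$ completes the proof. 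The only delicate point is bookkeeping: one must verify that the $k$ of \eqref{k1}--\eqref{k2} can be chosen simultaneously with the $N$ produced from $L$, but this is immediate since \eqref{k1}--\eqref{k2} allow any $N$ once $\rho > m\delta$ is fixed, and that the chosen $n_i$ still satisfy $n_i \ge \delta N$ so that Lemma \ref{pade-poly} (and hence all subsequent bounds) applies — a check already carried out in the proof of Lemma \ref{ess}.
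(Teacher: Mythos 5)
Your proposal is correct and follows the same route as the paper: bound $|R|<\tfrac12$ via Lemma~\ref{ess}, use Lemma~\ref{determ_cond} together with Lemma~\ref{last_app} to find $j$ with $|G_{jk}|\ge 1$, deduce $|L|\ge(2|p_{jk}|)^{-1}$, apply \eqref{flim_p}, and finish by the substitution $\delta_0=m\delta$, $\rho_0=\rho-m\delta$ and letting $\varepsilon\to 0$. The "delicate point" you flag (choosing $k$ after $N$, and preserving $n_i\ge\delta N$) is indeed exactly where the paper's Lemma~\ref{ess} and the range \eqref{k1}--\eqref{k2} do the work, so nothing is missing.
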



\section{An Alternative Proof}
In this section we obtain a Baker-type linear independence measure for the numbers $1, f_1(\alpha), \ldots, f_m(\alpha)$, where $\alpha\in K \setminus\{0\}$ and $P(\alpha q^{-k})\ne 0$, by using
Matala-aho's axiomatic Baker-type results \cite{Mat}. First we  denote
\[\bar{n} = (n_1,\ldots,n_m)^T, \quad N= N(\bar{n})= n_1+\ldots+n_m\]
and modify the upper bounds obtained in Lemma
\ref{ubounds} in the form
\begin{align}
|p_{jk}| &\le e^{U(N)},\\
|r_{jik}| &\le e^{-V_i(\bar{n})},
\end{align}  
where \[U(N) = \log|a|(\gamma_2 N^2 + N \log_{|a|}C_{12}):= \gamma_6 N^2 + \gamma_7 N\] and
\[V_i(\bar{n}) = \log|a|(\gamma_5Nn_i - \gamma_4 N^2 - N\log_{|a|}C_{13}):= \gamma_8 Nn_i - \gamma_9 N^2 - \gamma_{10}N.\]
These upper bounds hold for every $N \ge C_{14}$ and if we assume that \eqref{cond-rho} and \eqref{gamma_cond2} hold, we have $\gamma_6, \gamma_7, \gamma_8, \gamma_9, \gamma_{10}\in \R_+$ and $\gamma_8- m\gamma_9 >0$. Furthermore, we suppose that all the assumptions of Theorem \ref{thm2} hold.  Now Theorem and 3.1 and 3.6
in \cite{Mat} imply that for all $(l_0,l_1,\ldots, l_m)\in \Z_K^{m+1}\setminus\{\bar{0}\}$ we have
\begin{equation}
|l_0 + l_1f_1(\alpha) + \ldots + l_m f_m(\alpha)| > F_1\left(\prod_{i=1}^m (2mH_i)\right)^{-\frac{\gamma_6}{\gamma_8-m\gamma_9}-\frac{A}{\sqrt{\log H}}},
\end{equation}
where 
\[H=\prod_{i=1}^m(2mH_i) \ge F_2, \quad H_i =\max\{1, |l_i|\},\]
\[F_2 = e^{C_{14}^2(\gamma_8-m\gamma_9)-C_{14}(m^2\gamma_9+m\gamma_10)-m^2\gamma_{10}},\]
\[
A=\frac{1}{\sqrt{\gamma_8-m\gamma_9}}\left(\frac{\gamma_6(m^2\gamma_9+m\gamma_{10})}{\gamma_8-m\gamma_9} + 2m\gamma_6 + \gamma_7 \right)\]
and
$F_1=\frac{1}{2}e^{-B}$ with
\[\begin{split}
B=& \left(\frac{\gamma_6(m^2\gamma_9 + m\gamma_{10})}{\gamma_8-m\gamma_9} +2m\gamma_6 + \gamma_7\right)\left(\frac{m^2\gamma_9+ m\gamma_{10}}{\gamma_8-m\gamma_9)}+m\sqrt{\frac{\gamma_{10}}{\gamma_8-m\gamma_9}}\right) \\ &+ \frac{m^2\gamma_6\gamma_{10}}{\gamma_8-m\gamma_9} + m^2\gamma_6 + m\gamma_7.
\end{split}\]
Theorem \ref{thm2}.is a corollary of the above result. 


\section{Proof of Theorem 1}
We prove now Theorem \ref{thm1} by using Theorem \ref{thm2}, where we have
a linear independence measure for the numbers $1,f_1(\alpha), \ldots, f_m(\alpha)$. We choose  $Q_i(z)= -P(z) \quad (i=1,\ldots, m)$, which implies that \[f_i(q)= \phi(\alpha_i)\] as we proved  in Section \ref{difference}.

We assume that either  $\deg P(z) < s$ or $\deg P(z)=s$ and $\alpha_i \neq P_sq^n$ ($i=1,\ldots, m$, $n\in\Z_+$), where $P_s$ is the leading coefficient of $P(z)$. These assumptions imply that none of corresponding $f_i(z)$ is not a polynomial in $K[z]$ \,(see details in \cite{AKV}).
Thus 
Theorem \ref{thm2} 
implies the linear independence measure for the numbers $1$, $\phi(\alpha_1), \ldots, \phi(\alpha_m)$ expressed in Theorem \ref{thm1}, where instead of $\mu(m,s)$ is  $\frac{\gamma_2}{\gamma_5-m\gamma_4}$,
 which equals
\[\frac{4-7\delta_0(1-\delta_0) -\delta_0^3 + 2\delta_0\rho_0 +s\delta_0(\rho_0+\delta_0)^2}
 {2\delta_0(1-\delta_0)\rho_0 -m(4-7\delta_0 + 5\delta_0^2-\delta_0^3)-\gamma(\delta_0+\rho_0)\delta_0(2+2m -2\delta_0+sm(\delta_0+\rho_0))}.\]
 We choose now $\delta_0= 1/2$, then 
\[
\frac{\gamma_2}{\gamma_5-m\gamma_4}
       =\frac{4s\rho_0^2 + 4(s+2)\rho_0 + (s+17)}{4\rho_0 -13m - \gamma(4sm\rho_0^2+ 4(2m +ms +1)\rho_0+(4m+ms+2))}
\]
and
\[ \begin{split}
\rho_0(m,s, \gamma)= \frac{13m}{4} + &\frac{\gamma}{2(1-\gamma)} \\
&+\sqrt{\left(\frac{13m}{4} +\frac{\gamma }{2(1-\gamma)}\right)^2 + 
\frac{13m(s+2)+s+17}{4s} + \frac{(s+2)\gamma}{2s(1-\gamma)}}\end{split}\]
admits the minimum value
for the above expression (in the case $\delta_0= 1/2$).
The essential conditions \eqref{bound_rho} and \eqref{bound_gamma} for $\rho_0$ and $\gamma$ become in this case in the form
\begin{equation}\label{fbound_rho}
\rho_0 > \frac{13m}{4}
\end{equation}
and
\begin{equation}\label{fbound_gamma}
\gamma < \frac{4\rho_0-13m}{4ms\rho_0^2 + 4(2m+ms+1)\rho_0+4m+ms+2}.
\end{equation}
The condition \eqref{fbound_rho}  is clearly satisfied. For the latter one we define a function $f:[0,1[\to \R$, 
\[f(\tau) = \frac{4\rho_0(m,s,\tau)-13m}{4ms\rho_0^2(m,s, \tau) + 4(2m+ms+1)\rho_0(m,s, \tau)+4m+ms+2}- \tau.\]
Since $\rho_0(m,s,0) > \frac{13m}{4}$, we see that $f(0)$ is positive. On the other hand \[\lim_{\tau \to 1^-} \rho_0(m,s,\tau) =\infty,\]  hence $\lim_{\tau \to 1^-}f(\tau)= -1$. Because $f$ is continuous it has at least one zero-point in the interval $]0,1[$. We define
\[\Gamma(m,s)= \min\{\tau | f(\tau)=0, \, 0<\tau <1\}.\]
If we choose
\[0 \le \gamma < \Gamma(m,s)\]
the condition \eqref{fbound_gamma} will be satisfied.  This proves the Theorem 1.



\end{document}